\let\mathcal\mathscr
\def\Z{{\mathbf Z}}
\def\C{{\mathbf C}}
\def\CP{{\mathbf{CP}}}
\def\R{{\mathbf R}}
\def\P{{\mathbf P}}
\def\B{{\bf B}}
\def\bS{{\mathbf S}}
\def\cha{characteristic}
\def\phi{{\varphi}}
\def\cO{\mathcal{O}}
\def\llra{\hbox to 10mm{\rightarrowfill}}
\def\lllra{\hbox to 15mm{\rightarrowfill}}
\def\isom{\simeq}
\def\ie{\hbox{i.e.}}
\DeclareMathOperator{\Td}{td}
\DeclareMathOperator{\td}{td}
\DeclareMathOperator{\ch}{ch}
\DeclareMathOperator{\Pic}{Pic}
\def\llra{\hbox to 10mm{\rightarrowfill}}
\def\lllra{\hbox to 15mm{\rightarrowfill}}
\newtheorem{lemm}{Lemma}
\newtheorem{theo}[lemm]{Theorem}
\newtheorem{coro}[lemm]{Corollary}
\theoremstyle{definition}
\newtheorem*{ques*}{Question}
\theoremstyle{remark}
\newtheorem*{remark*}{Remark}
\newtheorem*{note*}{Note}
\begin{document}
\title{Cohomological characterizations of the complex projective space}

\author[O.~Debarre]{Olivier Debarre}
\address{Univ  Paris Diderot, \'Ecole normale su\-p\'e\-rieu\-re, PSL Research University, CNRS, D\'epar\-te\-ment Math\'ematiques et Applications\\45 rue d'Ulm, 75230 Paris cedex 05, France}
\email{{olivier.debarre@ens.fr}}
\urladdr{\url{http://www.math.ens.fr/~debarre/}}

\begin{abstract}
In this survey, we discuss whether  the complex projective space can be characterized by its integral cohomology ring among compact complex manifolds.
\end{abstract}

    \subjclass[2010]{32Q55, 14F25, 14F45, 14J45, 14Q15, 14C40, 32Q15, 32Q57.}

\maketitle


\section{Introduction}

Our starting point is the following 1957 result from \cite{hk}.

\begin{theo}[Hirzebruch--Kodaira, Yau]
Any compact  K\"ahler manifold which is homeomorphic to $\CP^n$ is  biholomorphic to $\CP^n$.
\end{theo}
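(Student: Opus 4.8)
The plan is to recover the complex geometry of $X$ from its cohomology ring together with Hodge theory, reduce the whole problem to the determination of the canonical bundle, and then invoke Yau's solution of the Calabi conjecture exactly at the point where purely topological and characteristic-class methods fail. \emph{First}, I would record the consequences of a homeomorphism $f\colon X \isomto \CP^n$. The Betti numbers of $X$ are those of $\CP^n$; in particular $b_1=0$ and $H^2(X,\Z)\cong\Z$, generated by $h:=f^*(\text{hyperplane class})$, with $h^n$ a generator of $H^{2n}(X,\Z)$. Using the Hodge decomposition together with $b_{2i+1}=0$, $b_{2i}=1$, and the fact that the powers $\omega^i$ of a K\"ahler class give nonzero $(i,i)$-classes, one gets $h^{p,0}=0$ for all $p\ge1$; hence $H^q(X,\cO_X)=0$ for $q>0$ and $\chi(X,\cO_X)=1$. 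From the exponential sequence $\Pic(X)\cong H^2(X,\Z)=\Z$, and since the K\"ahler class is a real multiple of the integral class $h$, Kodaira's embedding theorem shows that $X$ is projective; fixing the sign of $h$ to be ample, let $\cO_X(1)$ be the corresponding ample generator.

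\emph{Next}, I would pin down the canonical class up to sign. Write $K_X=\cO_X(k)$, so $c_1(X)=-k\,h$. Since rational Pontryagin classes are topological invariants (Novikov), $p_j(X)=p_j(\CP^n)=\binom{n+1}{j}h^{2j}$; in particular $p_1(X)=(n+1)h^2=c_1(X)^2-2c_2(X)$. A computation with the Hirzebruch--Riemann--Roch formula --- exploiting that $\chi(X,\cO_X(m))\in\Z$ for all $m$, the topologically determined Pontryagin numbers, and $\chi(\cO_X)=1$ --- shows that $c(X)=(1+h)^{n+1}$ or $(1-h)^{n+1}$; equivalently $k=-(n+1)$ or $k=n+1$, while in both cases $c_2(X)=\binom{n+1}{2}h^2$. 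The case $k=0$ is excluded immediately, since a trivial $K_X$ would give a nonzero holomorphic $n$-form, contradicting $h^{n,0}=0$. I would then observe that for $n$ \emph{odd} the sign is already forced: the degree-$n$ part of the Todd class is odd in $c_1$, so the conjugate Chern data would yield $\chi(\cO_X)=-1$, a contradiction. Hence $k=-(n+1)$ and $X$ is Fano when $n$ is odd.

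\emph{The hard part is $n$ even}, where the Todd-genus relation only gives $k=\pm(n+1)$ and no cohomological or characteristic-number invariant can distinguish a complex structure from its conjugate. Suppose for contradiction that $k=n+1$, so $K_X=\cO_X(n+1)$ is ample and $X$ is of general type. By the Aubin--Yau theorem $X$ then carries a K\"ahler--Einstein metric of negative Ricci curvature, and its Chern classes satisfy the Miyaoka--Yau inequality. But with $c_1(X)^2=(n+1)^2h^2$ and $c_2(X)=\binom{n+1}{2}h^2$ one computes that the class $2(n+1)\,c_2(X)-n\,c_1(X)^2$ vanishes in $H^4(X,\R)$, so equality holds in the Miyaoka--Yau inequality. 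By the equality case, $X$ would be biholomorphic to a quotient of the unit ball $\B^n\subset\C^n$ by a torsion-free discrete group; such a quotient is a $K(\pi,1)$ with infinite fundamental group, contradicting $\pi_1(X)=\pi_1(\CP^n)=1$. Thus $k=-(n+1)$ and $X$ is Fano for $n$ even as well.

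\emph{Finally}, $X$ is a Fano $n$-fold with $-K_X=\cO_X(n+1)$, i.e.\ of index $n+1$. By the theorem of Kobayashi--Ochiai, a Fano $n$-fold of index $n+1$ is biholomorphic to $\CP^n$; concretely, Kodaira vanishing applied to $\cO_X(1)=K_X\otimes\cO_X(n+2)$ and Riemann--Roch give $h^0(X,\cO_X(1))=\chi(X,\cO_X(1))=n+1$, and since $\int_X h^n=1$ the linear system $|\cO_X(1)|$ realizes $X$ as $\CP^n$. I expect the decisive obstacle to be the sign of $c_1$ for even $n$: it is invisible to the cohomology ring and to all characteristic numbers, and is supplied precisely by Yau's solution of the Calabi conjecture through the rigidity in the equality case of the Miyaoka--Yau inequality.
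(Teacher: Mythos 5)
Your proposal is correct and follows essentially the same route that the paper attributes to Hirzebruch--Kodaira, Novikov, and Yau: Hodge theory plus Novikov's topological invariance of rational Pontryagin classes to reduce, via Hirzebruch--Riemann--Roch integrality, to $c_1(X)=\pm(n+1)h$ (the negative sign only possible for $n$ even), then Yau's equality case of the Miyaoka--Yau inequality to rule out the ball-quotient alternative using $\pi_1(X)=1$, and finally Kobayashi--Ochiai to identify the Fano case with $\P^n$. The one step you assert rather than execute --- that the Riemann--Roch/Pontryagin computation forces $c_1=\pm(n+1)h$ and $c_2=\binom{n+1}{2}h^2$ --- is exactly the computational core of the cited Hirzebruch--Kodaira paper, which the survey likewise does not reproduce, so there is no substantive divergence or gap.
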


Actually, Hirzebruch--Kodaira proved this result  under the additional assumptions that the  K\"ahler manifold   is {\em diffeomorphic} to $\CP^n$ and that, when $n$ is even,   $c_1(T_X)$ is not  $n+1$ times a negative generator of $H^2(X,\Z)$. The first assumption was    dropped (see \cite{mor}) when Novikov    proved in \cite{nov} that Pontryagin classes are invariant under homeomorphisms; the second assumption was also   dropped later thanks to   work of Yau (\cite{yau}). If one does not assume that $X$ is K\"ahler, the conclusion still holds for $n\le 2$ (complex surfaces with even first Betti number  are K\"ahler by \cite{buc,lam}), but nothing is known for $n\ge 3$; if the  K\"ahler assumption can be dropped when $n=3$,  the sphere $\bS^6$ has no complex structure.

Stronger characterizations were proved in   dimensions $n\le 6$ by Fujita (\cite{fuj1}) and Libgober--Wood   (\cite{lw}) assuming only that  the K\"ahler manifold has the homotopy type of $\CP^n$. Looking carefully through their arguments, it is not too difficult to extract a proof of the following stronger result.

\begin{theo}
Let  $n$ be an integer with $n\le 6$. Any  compact 
K\"ahler manifold with the same  integral cohomology ring as  $\CP^n$ is 
\begin{itemize}
\item either  isomorphic to $\CP^n$;
\item or a quotient of the unit balls $\B^4$ or $\B^6$.
\end{itemize}
\end{theo}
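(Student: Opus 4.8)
The plan is to reduce everything to a computation of the Chern classes of $X$ and then to invoke two structural theorems, one in the Fano regime and one in the regime of general type. Since $X$ is K\"ahler and $H^2(X,\Z)\cong\Z$, a K\"ahler class is a real multiple of the integral generator, hence rational, so Kodaira's embedding theorem makes $X$ projective; fix the ample generator $h$ of $H^2(X,\Z)$, so that $h^n[X]=1$. The hypothesis that the integral cohomology ring equals that of $\CP^n$ forces all torsion to vanish and all odd Betti numbers to be zero, and Hodge symmetry then gives $h^{p,q}(X)=\delta_{p,q}$. In particular $\chi(X,\cO_X)=1$ and $\chi_{\mathrm{top}}(X)=n+1$. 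Writing $c_i(T_X)=a_ih^i$ with $a_i\in\Z$, the last equality reads $c_n(X)[X]=a_n=n+1$.

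First I would extract numerical constraints on the remaining $a_i$. By Hirzebruch--Riemann--Roch each Hodge--Chern number $\chi^p(X)=\sum_q(-1)^qh^{p,q}(X)=(-1)^p$ is a universal polynomial in the degree-$n$ Chern monomials of $X$, and every such monomial equals the corresponding product $a_{i_1}\cdots a_{i_k}$ since $h^n[X]=1$. Adjoining the Libgober--Wood identity, which expresses $c_1c_{n-1}[X]$ through the Hodge numbers alone and hence gives $c_1c_{n-1}[X]=a_1a_{n-1}=\tfrac{n(n+1)^2}{2}$ (so in particular $a_1\neq0$), one obtains a system of polynomial equations in $a_1,\dots,a_{n-1}$. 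The decisive point, and the step I expect to be the main obstacle, is that for $n\le6$ this system has essentially only the expected solutions: it forces $a_1=\pm(n+1)$ and $a_2=\binom{n+1}{2}$. A useful guide here is the involution $a_i\mapsto(-1)^ia_i$, which multiplies every degree-$n$ Chern number by $(-1)^n$; for even $n$ it therefore preserves the solution set (accounting for both signs of $a_1$, with $a_2$ unchanged), whereas for odd $n$ it reverses $\chi_{\mathrm{top}}$ and so is incompatible with $c_n[X]=n+1>0$, pinning down $a_1=+(n+1)$. Controlling the number of independent genus relations against the number of unknowns is exactly what breaks down past dimension $6$.

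With the Chern data in hand the geometry splits in two. If $a_1=n+1$, then $-K_X=(n+1)h$ is ample with $h$ primitive, so the Kobayashi--Ochiai theorem yields $X\cong\CP^n$. If instead $a_1=-(n+1)$, then $K_X=(n+1)h$ is ample, so $X$ is of general type, and the values $a_1^2=(n+1)^2$ and $a_2=\binom{n+1}{2}$ give the equality $n\,c_1(X)^n=2(n+1)\,c_1(X)^{n-2}c_2(X)$, that is, equality in the Miyaoka--Yau inequality. By Yau's uniformization theorem $X$ is then a ball quotient $\B^n/\Gamma$.

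It remains to see in which dimensions the second alternative can actually carry the integral cohomology ring of $\CP^n$. By Hirzebruch proportionality the Chern numbers of a compact quotient $\B^n/\Gamma$ are $(-1)^n$ times a positive multiple of those of the compact dual $\CP^n$, so $\chi_{\mathrm{top}}(\B^n/\Gamma)$ has sign $(-1)^n$; since we need $\chi_{\mathrm{top}}(X)=n+1>0$, the ball-quotient case occurs only for even $n$, that is $n\in\{2,4,6\}$. For $n=2$ it produces a fake projective plane, whose first homology is a nontrivial finite group, so $H^2$ has torsion and the integral cohomology ring cannot equal that of $\CP^2$; this excludes $n=2$ and leaves quotients of $\B^4$ and $\B^6$. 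The two pieces I would expect to demand the most care are the Diophantine analysis of the genus system (why it closes up precisely for $n\le6$) and the torsion computation ruling out fake projective planes.
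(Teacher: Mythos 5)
Your outline reproduces the paper's architecture exactly: Hirzebruch--Riemann--Roch constraints on the Chern numbers, Kobayashi--Ochiai \eqref{koo} in the Fano case, equality in Yau's inequality \eqref{yauu} plus uniformization in the general-type case, Hirzebruch proportionality to force $n$ even for ball quotients, and Prasad--Yeung's torsion result \cite{py} to eliminate $\B^2$. The problem is the step you defer: the claim that the numerical system forces $a_1=\pm(n+1)$ and $a_2=\binom{n+1}{2}$ is not a verification to be postponed --- it \emph{is} the proof, and it occupies the paper's entire treatment of dimensions $3$ through $6$. Moreover, the claim is not provable from the data you allow yourself. You work only with the values $\chi^p(X)=(-1)^p$, i.e.\ with the coefficients of the polynomial $t_n$; the paper needs strictly stronger inputs: the congruences of Lemma \ref{l1} come from integrality of the full Hilbert polynomial $\chi(X,L^m)$ of \eqref{holt} at \emph{all} integers $m$ (not just its values at $m$ where it equals some $\chi^p$), dimension $5$ further requires integrality of $\chi(X,T_X\otimes L^{\pm1})$ from \eqref{tholt}, and dimension $6$ requires computer-assisted divisibility and non-square-discriminant checks.

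Two concrete failures. First, for $n=3$ the numerical constraints ($c_3=4$, $c_1c_2=24$, $c_1$ even by Lemma \ref{l1}, $c_1\le 4$ by \eqref{koo}, $c_1>0$ by \eqref{yauu}) leave the case $c_1=2$, $c_2=12$ open, and no Chern-number or congruence argument in the paper eliminates it. The paper instead invokes geometry: by Fujita's classification \cite{fuj}, \cite[Theorem 3.2.5]{ip}, such an $X$ would be a degree-$6$ hypersurface in the weighted projective space $\P(3,2,1,1,1)$, which has $h^{1,2}=21$, contradicting the cohomological hypothesis. So ``the system has only the expected solutions'' is false precisely in an odd dimension where you claim the sign is pinned down. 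Second, your involution heuristic proves nothing: for odd $n$, the fact that the solution set is not preserved by $a_i\mapsto(-1)^ia_i$ does not exclude any individual solution, positive or negative. For $n=5$ the paper must kill $c_1\in\{\pm2,-6,-10,-30\}$ one at a time (with $c_1=6$ the Kobayashi--Ochiai equality case), and the \emph{positive} value $c_1=2$ among them falls only to a mod-$24$ congruence on $\chi(X,T_X\otimes L^{-1})$, not to any sign consideration. In short, your proposal is a correct road map of the paper's strategy with the mathematical content --- the dimension-by-dimension Diophantine and geometric case analysis --- left out.
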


No quotients of even dimensional unit balls $\B^{2m}$ with the same  integral cohomology rings as  $\P^{2m}$ are known  (all known examples have torsion in $H^2$). It is therefore legitimate to ask the following question.

\begin{ques*} Is any compact K\"ahler  manifold with same integral cohomology ring as $\CP^n$ isomorphic to $\CP^n$? 
\end{ques*}

 The methods used in the   proof of the theorem above are completely computational and it seems unlikely that they can be generalized to higher dimensions (we obtain only partial results in dimension 7 in Theorem \ref{th7}).  Using geometrical arguments would perhaps be a good idea to make further progress.

\subsection*{Acknowledgements}
All the computations were done with the  software {\tt Sage} (\cite{sage}). Many thanks to Pierre Guillot for his computations of the polynomials $t_n$ in Section \ref{seco}.

\section{Preliminaries}

From now on, we will write $\P^n$ instead of $\CP^n$ for the complex projective space of dimension $n$.

\subsection{Hirzebruch--Riemann--Roch}\label{se1.1} Let $X$ be a  projective complex  manifold  of dimension $n$. Following \cite{hir}, we set
 \begin{equation*}
\chi^p(X):=\sum_{q=0}^n(-1)^qh^{p,q}(X)=\chi(X,\Omega_X^p) 
\end{equation*}
and we define  the $\chi_y$-genus
 \begin{equation*}
\chi_y(X):=\sum_{p=0}^n  \chi^p(X) y^p=\sum_{p, q=0}^n(-1)^qh^{p,q}(X)y^p \in \Z[y].
\end{equation*}
 For instance, $\chi_0(X)=\chi(X,\cO_X )$ and $\chi_{-1}(X)=\chi_{\rm top}(X )$. One consequence of the Hirzebruch--Riemann--Roch theorem is that the coefficients $\chi^p(X)$ of the polynomial $\chi_y(X)$ can be expressed in terms of the Chern classes of $X $ (\cite[Section~IV.21.3,~(10)]{hir})
 \begin{equation*}
\chi^p(X)=T_n^p(c_1(X),\dots,c_n(X)) \quad\text{or}\quad  \chi_y(X)=T_n(y;c_1(X),\dots,c_n(X)),
\end{equation*}
 where  $T_n(y;c_1,\dots,c_n):= \sum_{p=0}^nT_n^p(c_1,\dots,c_n)y^p$. These polynomials satisfy $T^p_n=(-1)^nT_n^{n-p}$ and they can be explicitly determined (\cite[Section~I.1.8,~(10)]{hir}).
For example, the constant terms $T_n^0(c_1,\dots,c_n)$ (which are also $(-1)^n$ times the  leading term) are the Todd polynomials   $\td_n(c_1,\dots,c_n)$  (\cite[Section~I.1.7,~(10)]{hir})
and $T_n (-1;c_1,\dots,c_n)=c_n$, so that
\begin{equation}\label{cn0}
c_n(X)=\chi_{\rm top}(X) .
\end{equation}
Libgober--Wood also introduce the polynomials 
\begin{equation}\label{tn}
t_n(z;c_1,\dots,c_n):=T_n(z-1;c_1,\dots,c_n) .
\end{equation}
They show (\cite[Lemma~2.2]{lw})
\begin{equation}\label{tnn}
t_n(z;c_1,\dots,c_n) =c_n-\tfrac12nc_nz+\tfrac{1}{12}\bigl( \tfrac12 n(3n-5)c_n+c_1c_{n-1}\bigr)z^2 +\cdots
\end{equation}
 and   they compute  these polynomials for $n\le 6$ (\cite[p.\ 145]{lw}). We extend their computations to all $n\le9$ in Section~\ref{seco}.

\subsection{Compact K\"ahler  manifolds with same
Betti numbers as $\P^n$} Let $X$ be a    compact K\"ahler  manifold  with the same
Betti numbers as $\P^n$.

Since $X$ is K\"ahler, one can compute the numbers $h^{p,q}(X)$ from its Betti numbers, and we see that $h^{p,q}(X)= h^{p,q}(\P^n)=1$ if    $p=q\in\{0,\dots,n\}$, and $h^{p,q}(X)=0$ otherwise.
 In particular, $X$ is projective (Kodaira). 
Setting $c_i(X):=c_i(T_X)\in H^{2i}(X,\Z)$, we deduce from the Hirzebruch--Riemann--Roch theorem (Section~\ref{se1.1}) the equalities
\begin{multline}\label{tnpn}
\qquad t_n(z;c_1(X),\dots,c_n(X))=t_n(z;c_1(\P^n),\dots,c_n(\P^n)) \\{}=t_n\bigl(z;\tbinom{n+1}{1},\dots,\tbinom{n+1}{n}\bigr)=\sum_{i=0}^n\tbinom{n+1}{i+1}(-1)^iz^i.\qquad
\end{multline}
In particular, \eqref{tnn} implies
 \begin{equation}\label{cn} 
c_n(X)=c_n(\P^n)=n+1\ ,\ c_1(X)c_{n-1}(X)=c_1(\P^n)c_{n-1}(\P^n)=\tfrac12n(n+1)^2.
\end{equation}

Assume now $c_1(X)<0$,\footnote{In the sense that the image of $c_1(X)$ in $H^2(X,\R)$ is a negative multiple of the class of a K\"ahler metric.}   so that $X$ is of general type. We have (\cite[Remark (iii)]{yau})
\begin{equation}\label{yauu}
\begin{array}{c}
 \bigl(\tfrac{2(n+1)}{n}c_2(X)-c_1(X)^2\bigr)\cdot (-c_1(X))^{n-2}\ge 0 \\  
 \hbox{with equality if and only if $X$ is covered by the unit ball in $\C^n$,}
 \end{array}
\end{equation}
in which case, by the Hirzebruch proportionality principle, all the  Chern numbers of $X$ are the same as those of $\P^n$ and $n$ is even.

If on the other hand  $c_1(X)>0$, so that $X$ is a Fano manifold, the group $\Pic(X)\isom H^2(X,\Z)$ is torsion-free (\cite[Proposition 2.1.2]{ip}) and 
we write $K_X =-c_1L$, where $L$ is an ample generator of $\Pic(X)$. We have (\cite{ko})
\begin{equation}\label{koo}
c_1\le n+1 \quad 
 \hbox{with equality if and only if $X\isom \P^n$.}
\end{equation}

\subsection{Compact K\"ahler  manifolds with same
  integral cohomology ring as $\P^n$}

Assume   now that  $X$ has the same
  integral cohomology ring as $\P^n$.  We have $\Pic(X)=\Z L$, where $L$ is ample with $L^n=1$, and $\ell:=c_1(L)$ generates $H^2(X,\Z) $. We define integers $c_1,\dots,c_n$ by setting $c_i(X)=c_i\ell^i$ and we  compute   Euler \cha s using the Hirzebruch--Riemann--Roch theorem (\cite[Theorem~20.3.2]{hir})
\begin{eqnarray}
\chi(X,L^m)&=&\bigl[e^{m\ell}\cdot \Td(X)\bigr]_n = \Td_n(X)+\cdots +\frac{m^{n-2}}{(n-2)!}\Td_2(X)
\label{holt}\\
&&\hskip55mm{}+\frac{m^{n-1}}{(n-1)!}\Td_1(X)+
\frac{m^n}{n!},\nonumber\\
 \chi(X,T_X\otimes L^m)&=&\bigl[\ch(T_X)\cdot e^{m\ell}\cdot \Td(X)\bigr]_n.
\label{tholt}\end{eqnarray}

\begin{lemm}\label{l1}
We have
\begin{eqnarray}
c_1-(n+1)&\equiv& 0\pmod2,\label{c1}\\
c_1^2+c_2-3nc_1+\tfrac12(n+1)(3n-2)&\equiv& 0\pmod{12}.\label{c1c2}
\end{eqnarray}
\end{lemm}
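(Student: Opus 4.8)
The plan is to exploit the fact that $\chi(X,L^m)$, viewed as a function of $m\in\Z$, is an integer-valued polynomial of degree $n$. Indeed, \eqref{holt} exhibits $P(m):=\chi(X,L^m)$ as a polynomial in $m$ whose leading term is $m^n/n!$, the coefficient $1/n!$ coming from $L^n=1$. Since $P$ takes integer values on $\Z$, the standard characterization of integer-valued polynomials shows that it expands uniquely in the binomial basis as $P(m)=\sum_{k=0}^n a_k\binom{m}{k}$ with all $a_k\in\Z$, and comparing leading terms forces $a_n=1$. Both congruences will then come from reading off the coefficients of $m^{n-1}$ and of $m^{n-2}$ in this expansion and matching them against the Todd-class side of \eqref{holt}, using $\td_1=\tfrac12c_1(X)$ and $\td_2=\tfrac1{12}(c_1(X)^2+c_2(X))$, so that $\Td_1(X)=\tfrac12c_1$ and $\Td_2(X)=\tfrac1{12}(c_1^2+c_2)$.

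First I would treat \eqref{c1}. The coefficient of $m^{n-1}$ in $P$ equals $\Td_1(X)/(n-1)!=c_1/2(n-1)!$; on the binomial side it equals $a_{n-1}/(n-1)!$ plus the contribution $-\tfrac12(n-1)/(n-1)!$ coming from the subleading term of $\binom{m}{n}$. Equating the two gives $a_{n-1}=\tfrac12(c_1+n-1)$, and $a_{n-1}\in\Z$ is exactly $c_1\equiv n+1\pmod 2$.

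For \eqref{c1c2} I would repeat this one degree lower. Matching the coefficients of $m^{n-2}$, using $[m^{n-2}]\binom{m}{n}=e_2(0,1,\dots,n-1)/n!$ with $e_2(0,1,\dots,n-1)=\tfrac1{24}(n-2)(n-1)n(3n-1)$, together with the value of $a_{n-1}$ found above, yields an explicit formula for $a_{n-2}$. Clearing denominators (multiplying by $24$) turns the integrality $a_{n-2}\in\Z$ into the congruence $2c_1^2+2c_2+6(n-2)c_1+3n^2-11n+10\equiv0\pmod{24}$.

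The one genuinely delicate point — and the step I expect to be the main obstacle — is reconciling this with the statement \eqref{c1c2}. Doubling \eqref{c1c2} gives $2c_1^2+2c_2-6nc_1+(n+1)(3n-2)\equiv0\pmod{24}$, and subtracting this from the congruence just obtained leaves $12(n-1)(c_1-1)$. This is divisible by $24$ if and only if $(n-1)(c_1-1)$ is even, which is precisely what \eqref{c1} guarantees: if $n$ is odd then $n-1$ is even, while if $n$ is even then $c_1$ is odd by \eqref{c1}, so $c_1-1$ is even. Thus the mod-$24$ relation coming from $a_{n-2}\in\Z$ is equivalent, via the first congruence, to the asserted mod-$12$ relation, and the two parts of the lemma are proved together. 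Everything outside this factor-of-two bookkeeping is a routine finite-difference computation.
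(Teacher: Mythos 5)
Your proof is correct, and it runs on the same engine as the paper's: $\chi(X,L^m)$ is an integer-valued polynomial of degree $n$ in $m$ with leading coefficient $1/n!$ (from $L^n=1$), so its coefficients in an integral binomial basis are integers, and matching the $m^{n-1}$ and $m^{n-2}$ coefficients against $\Td_1(X)=\tfrac12 c_1$ and $\Td_2(X)=\tfrac1{12}(c_1^2+c_2)$ from \eqref{holt} yields the two congruences. The one genuine difference is the choice of basis: the paper expands in the shifted basis $\binom{m+n}{n}+a_1\binom{m+n-1}{n-1}+a_2\binom{m+n-2}{n-2}+\cdots$, while you use $\sum_k a_k\binom{m}{k}$. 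This is not purely cosmetic. With the shifted basis, the $m^{n-2}$ comparison produces the identity $c_1^2+c_2-3nc_1+\tfrac12(n+1)(3n-2)=12a_2$, so \eqref{c1c2} drops out directly modulo $12$ with no further input; with your basis, the same comparison only gives the mod-$24$ congruence $2c_1^2+2c_2+6(n-2)c_1+3n^2-11n+10\equiv 0\pmod{24}$, and you must feed \eqref{c1} back in to dispose of the discrepancy $12(n-1)(c_1-1)$. Your handling of that step is correct (indeed $c_1-1\equiv n\pmod 2$ by \eqref{c1}, so $(n-1)(c_1-1)\equiv (n-1)n\equiv 0\pmod 2$), but it is an extra twist that the paper's normalization avoids, since the shifted basis is the one adapted to $\P^n$ itself: for $X=\P^n$ one has $\chi(\P^n,\cO(m))=\binom{m+n}{n}$, so all the $a_i$ vanish and the congruences appear in exactly the stated form. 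Both arguments are complete; yours trades a slightly less natural basis for the (minor) cost of making \eqref{c1c2} logically dependent on \eqref{c1}.
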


\begin{proof}
Since the     polynomial in $m$ which appears in \eqref{holt} takes integral values at all integers $m$, it decomposes as
$$\binom{m+n}{n}+a_1\binom{m+n-1}{n-1}+a_2\binom{m+n-2}{n-2}+\cdots$$
where $a_1,a_2,\dots$ are integers. The coefficient of $m^{n-1}$ is
$$\frac1{n!}\sum_{i=1}^ni+\frac1{(n-1)!}a_1=\frac1{(n-1)!}\Td_1(X)=\frac1{(n-1)!}\frac12c_1,$$
hence $\frac{n+1}{2}+a_1=\frac12c_1$. This proves the   congruence \eqref{c1}.

The coefficient of $m^{n-2}$ is
$$\frac1{n!}\sum_{1\le i<j\le n}ij+\frac1{(n-1)!}\sum_{i=1}^{n-1}ia_1+\frac1{(n-2)!}a_2=\frac1{(n-2)!}\Td_2(X)=\frac1{(n-2)!}\frac1{12}(c_1^2+c_2).$$
The first sum is
$$\sum_{1\le i<j\le n}\! ij=\sum_{1\le j\le n}\! \frac{j(j-1)}{2}j=\frac12\Big[ \frac{n^2(n+1)^2}{4}-\frac{n(n+1)(2n+1)}{6}\Bigr]=\frac{n(n-1)(n+1)(3n+2)}{24}.$$
We obtain
$$\frac1{12}(c_1^2+c_2)=\frac{(n+1)(3n+2)}{24}+\frac{n}{2}\Bigl(  \frac12c_1-\frac{n+1}{2}\Bigr)+a_2
$$
and the   congruence \eqref{c1c2} follows.
 \end{proof}

 \section{Surfaces}
 
 \begin{theo}[\cite{yau}] 
Any  compact 
complex manifold with the same Betti numbers as  $\P^2$ is 
\begin{itemize}
\item either  isomorphic to $\P^2$;
\item or a quotient of the unit ball $\B^2$.
\end{itemize}
\end{theo}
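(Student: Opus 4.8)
The plan is to push $X$ into the Kähler world, read off its Chern numbers from the preliminaries, and then dichotomize on the sign of the canonical class, feeding the two extremal inequalities \eqref{yauu} and \eqref{koo} recorded above. First, since $b_1(X)=b_1(\P^2)=0$ is even, $X$ is Kähler by \cite{buc,lam}, so every computation of the preceding subsection applies to $X$. In particular $X$ has the Hodge numbers of $\P^2$, so $h^{1,0}(X)=h^{2,0}(X)=0$ and $h^{1,1}(X)=1$; thus $X$ is projective and its Picard number equals $1$. From \eqref{cn} with $n=2$ we read off $c_2(X)=3$ and $c_1(X)^2=9$.

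Since the Picard number is $1$, every divisor class is numerically a multiple of a fixed ample generator $\ell$ of $H^2(X,\Z)$ modulo torsion; in particular no curve has negative self-intersection, so $X$ is minimal, and $K_X\equiv k\ell$ for some integer $k$. The relation $K_X^2=c_1(X)^2=9\neq 0$ forces $k\neq 0$, so exactly one of two cases occurs: either $K_X$ is ample, i.e.\ $X$ is of general type with $c_1(X)<0$, or $-K_X$ is ample, i.e.\ $X$ is Fano with $c_1(X)>0$. No intermediate Kodaira dimension can arise, since $\kappa(X)\in\{0,1\}$ together with Picard number $1$ would force $K_X^2=0$.

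In the general-type case the left-hand side of \eqref{yauu} equals $3c_2(X)-c_1(X)^2=3\cdot 3-9=0$; equality holds, so $X$ is covered by the unit ball, and being compact it is a quotient of $\B^2$ --- the second alternative. In the Fano case $X$ is a del Pezzo surface with $K_X^2=9$, and the only del Pezzo surface of degree $9$ is $\P^2$; equivalently, writing $K_X=-c_1L$ for the ample generator $L$ of $\Pic(X)$ and using $c_1(X)^2=c_1^2\,L^2=9$ with $c_1\le 3$ from \eqref{koo} and $L^2\ge 1$, one is forced to $c_1=3$, and the equality clause of \eqref{koo} gives $X\cong\P^2$.

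The deep ingredient is the equality case of the Miyaoka--Yau inequality \eqref{yauu}: that $3c_2=c_1^2$ makes the universal cover of $X$ biholomorphic to the ball is exactly Yau's theorem, resting on his solution of the Calabi conjecture, and it is the analytic heart of the statement. The rest is elementary: the Riemann--Roch numerology of the first step, the dichotomy forced by Picard number $1$, and the one small algebraic point of recognizing the degree-$9$ del Pezzo surface as $\P^2$ (which is also what excludes the spurious value $c_1=1$ in the approach through \eqref{koo}).
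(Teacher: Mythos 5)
Your proof is correct and follows essentially the same route as the paper's: Kähler via Buchdahl--Lamari, the Chern numbers $c_1(X)^2=9$ and $c_2(X)=3$ from \eqref{cn}, then the sign dichotomy on $c_1(X)$, with \eqref{koo} yielding $\P^2$ in the Fano case and equality in \eqref{yauu} yielding a ball quotient in the general-type case. You additionally supply two details the paper leaves implicit --- the Picard-number-one argument forcing $K_X$ or $-K_X$ to be ample, and the del Pezzo classification excluding the index-one possibility $(c_1,L^2)=(1,9)$ --- and the latter is genuinely needed, since your purely numerical parenthetical ($c_1\le 3$ and $L^2\ge 1$) does not by itself force $c_1=3$, as you acknowledge in your closing remark.
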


\begin{proof} 
A  compact 
complex surface with even first Betti number is K\"ahler (\cite{buc,lam}). Equations \eqref{cn} then give $c_1(X)^2=9 $ and $c_2(X)=3$.
 
 If $c_1(X)>0$, the surface $X$ is    isomorphic to $\P^2$ by \eqref{koo}.
 
 If $c_1(X)<0$, there is equality in \eqref{yauu} and $X$ is a quotient of $\B^2$.
 \end{proof} 
 
  \begin{coro} 
Any  compact 
complex manifold with the same integral cohomology groups as  $\P^2$ is 
   isomorphic to $\P^2$.
 \end{coro}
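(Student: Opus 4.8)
The plan is to deduce the corollary from the preceding theorem, using the extra content of the integral cohomology \emph{groups} only to eliminate the ball-quotient alternative. First I would note that a compact complex manifold $X$ with the same integral cohomology groups as $\P^2$ has in particular the same Betti numbers, so the theorem applies: $X$ is either isomorphic to $\P^2$, in which case we are done, or a quotient of the unit ball $\B^2$. The information carried by the groups beyond the Betti numbers is precisely that $H^2(X,\Z)\isom\Z$ is torsion-free. Since $b_1(X)=0$, the universal coefficient theorem identifies the torsion subgroup of $H^2(X,\Z)$ with $H_1(X,\Z)$, so our hypothesis is equivalent to $H_1(X,\Z)=0$. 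The whole task is therefore to show that a ball quotient cannot satisfy $H_1(X,\Z)=0$.

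So suppose $X=\B^2/\Gamma$, with $\Gamma$ a torsion-free cocompact lattice in $\mathrm{PU}(2,1)$. It has the Betti numbers of $\P^2$ (so $c_1(X)^2=9$ and $c_2(X)=3$ by \eqref{cn}, with $b_1=0$ and $b_2=1$) and, arising in the $c_1(X)<0$ branch of the theorem, is of general type: by definition $X$ is a \emph{fake projective plane}. Here $H_1(X,\Z)$ is finite and equals the abelianization $\Gamma^{\mathrm{ab}}$. I would then invoke the fact that $H_1(X,\Z)\neq 0$ for every fake projective plane; combined with $H_1(X,\Z)=0$ from the first paragraph, this is a contradiction, so the ball-quotient case does not occur and $X\isom\P^2$.

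The hard part is justifying that $H_1(X,\Z)$ is nonzero for \emph{every} fake projective plane, and I do not expect any of the numerical tools used so far to suffice. Indeed, writing $c_1(X)=-3\ell$ in the general-type case, one checks that equality holds in \eqref{yauu} and that the congruences \eqref{c1}--\eqref{c1c2} of Lemma \ref{l1} hold exactly as for $\P^2$; thus Hirzebruch--Riemann--Roch cannot distinguish $\P^2$ from a hypothetical fake projective plane with torsion-free $H^2$. The nonvanishing of $H_1$ instead rests on the complete classification of fake projective planes: Prasad--Yeung reduced their enumeration to finitely many arithmetic classes, and Cartwright--Steger completed the classification and computed $H_1(X,\Z)$ for each resulting surface, finding it nontrivial in every case. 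This appeal to the classification is, I expect, the only non-formal ingredient; everything else is the reduction in the first two paragraphs.
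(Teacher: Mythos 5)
Your proposal is correct and follows essentially the same route as the paper: reduce via the theorem to the ball-quotient (fake projective plane) case, observe that torsion-freeness of $H^2(X,\Z)$ forces $H_1(X,\Z)=0$, and then invoke the classification result that every fake projective plane has $H_1(X,\Z)\neq 0$ (the paper cites this directly as \cite[Theorem~10.1]{py}). The only difference is cosmetic: the paper attributes the nonvanishing of $H_1$ to Prasad--Yeung's theorem itself rather than to the subsequent case-by-case computations, but the logical structure of the argument is identical.
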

 
 \begin{proof}
Compact quotients $X$ of the unit ball $\B^2$ are called fake projective planes. They are all classified and it was proved in  \cite[Theorem~10.1]{py} that $H_1(X,\Z)$ is always nonzero (and torsion). It follows that $H^2(X,\Z)_{\rm tors}\isom H_1(X,\Z)_{\rm tors}$ is nonzero, so the integral cohomology groups of fake projective planes are different  from those of  $\P^2$.
\end{proof}
 
 \section{Threefolds}
 
 In odd dimensions $2m-1$, it is definitely not enough to assume that the Betti numbers of $X$ and $\P^{2m-1}$ are the same: a smooth odd-dimensional quadric $X\subset \P^{2m}$ has this property, but, if $m\ge 2$, a positive generator $L$ of $H^2(X,\Z)$ satisfies $L^{2m-1}= 2$, hence $X$ is not even homeomorphic to $\P^{2m-1}$. In dimension 3, there are two other examples of Fano threefolds with the same Betti numbers as $\P^3$ (this is equivalent in that case to $b_2=1$ and $h^{1,2}=0$):  one with $L^3=5$ and one with $L^3=22$ (\cite[Table~12.2]{ip}). 
 
  \begin{theo}[\cite{fuj1,lw}] 
Any  compact 
K\"ahler manifold with the same  integral cohomology ring as  $\P^3$ is 
  isomorphic to $\P^3$.\end{theo}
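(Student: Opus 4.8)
The plan is to read off the Chern numbers of $X$ from the cohomology ring, use positivity to force $X$ to be Fano, and then conclude with the index bound \eqref{koo} together with the classification of Fano threefolds. First I would unwind the hypothesis: we have $\Pic(X)=\Z L$ with $\ell:=c_1(L)$ an ample generator of $H^2(X,\Z)$ and $L^3=1$; writing $c_i(X)=c_i\ell^i$, equation \eqref{cn} gives $c_3=4$ and $c_1c_2=24$, while \eqref{c1} in Lemma \ref{l1} gives that $c_1$ is even. I would also record that, $X$ being compact K\"ahler with $b_3(X)=b_3(\P^3)=0$, the Hodge decomposition forces $h^{1,2}(X)=h^{3,0}(X)=0$; this will be the decisive point at the very end.

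Next I would eliminate the non-Fano cases. Since $c_1c_2=24\ne 0$ we have $c_1\ne 0$. If $c_1<0$, then $K_X=-c_1\ell$ is ample, so $X$ is a minimal threefold of general type and the Miyaoka--Yau inequality \eqref{yauu} applies; evaluating its left-hand side with $c_i(X)=c_i\ell^i$ and $\ell^3=1$ yields $-c_1\bigl(\tfrac{8}{3}c_2-c_1^2\bigr)\ge 0$, hence $c_2\ge\tfrac38 c_1^2>0$, which contradicts $c_1c_2=24>0$ together with $c_1<0$ (the equality case of \eqref{yauu} is in any event excluded, as it would require $n=3$ to be even). Therefore $c_1>0$, and since $\ell$ is ample, $-K_X=c_1(X)$ is ample, so $X$ is a Fano threefold of Picard number one.

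Now \eqref{koo} bounds $c_1\le 4$, and combined with the parity \eqref{c1} this leaves only $c_1\in\{2,4\}$. If $c_1=4$, then \eqref{koo} gives $X\isom\P^3$ and we are done, so the whole theorem reduces to excluding $c_1=2$.

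The hard part is precisely this last exclusion, and I do not expect it to follow from the Hirzebruch--Riemann--Roch computations alone: for threefolds the entire $\chi_y$-genus is governed by $c_3$ and $c_1c_2$, so the constraints $c_3=4$, $c_1c_2=24$ are satisfied equally by $c_1=2,\ c_2=12$ and by $c_1=4,\ c_2=6$, and indeed $\chi(X,\cO_X)$, $\chi^1(X)$, and the integrality congruences of Lemma \ref{l1} cannot tell the two cases apart. To break the tie I would use geometry: the case $c_1=2$ means $-K_X=2L$ with $L^3=1$, i.e. $X$ is a Fano threefold of index two and degree one, a del Pezzo threefold of degree one. By the classification of Fano threefolds of Picard number one (\cite{ip}), every such threefold has $h^{1,2}=21$, hence $b_3=42\ne 0$, contradicting the vanishing $h^{1,2}(X)=0$ established above. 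Thus $c_1=2$ is impossible, $c_1=4$, and $X\isom\P^3$.
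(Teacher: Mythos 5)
Your proof is correct and follows essentially the same route as the paper's: the same identities $c_3=4$, $c_1c_2=24$ from \eqref{cn}, elimination of $c_1<0$ via \eqref{yauu}, reduction to $c_1\in\{2,4\}$ via \eqref{koo} and the parity congruence of Lemma \ref{l1}, and exclusion of $c_1=2$ by recognizing $X$ as a degree-one del Pezzo threefold and contradicting $h^{1,2}(X)=0$ with the value $h^{1,2}=21$. The only cosmetic difference is that the paper pins down such a threefold as a sextic hypersurface in $\P(3,2,1,1,1)$ via Fujita's classification, whereas you invoke the classification of Fano threefolds of Picard number one directly; the underlying fact is the same.
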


\begin{proof} 
  If $\ell$ is a positive generator of $H^2(X,\Z)$, we write as before  $c_i(X)=c_i\ell^i$.
 Equations  \eqref{cn}    give $c_3=4$ and  $c_1  c_2 =24$.
 If $c_1 <0$, we get $c_2 <0$, but this contradicts   \eqref{yauu}. Therefore, $c_1 $ is a positive divisor of $24$ which we can assume, by \eqref{koo}, to be 1, 2,   3, or 4. By Lemma \ref{l1}, $c_1$ is even, so we need only exclude $c_1=2$. In that case, $(X,L)$ is a so-called del Pezzo 
variety (coindex 2) with $L^3=1$. It is therefore isomorphic to a hypersurface of degree 6 in the weighted projective space $\P(3,2,1,1,1)$ (\cite{fuj}, \cite[Theorem 3.2.5]{ip}). But such a variety has $h^{1,2}=21$, so this is a contradiction. \end{proof}

  \section{Fourfolds}
  
    \begin{theo}[\cite{fuj1,lw}] 
Any  compact 
K\"ahler manifold with the same  integral cohomology ring as  $\P^4$ is 
\begin{itemize}
\item either  isomorphic to $\P^4$;
\item or a quotient of the unit ball $\B^4$.
\end{itemize}\end{theo}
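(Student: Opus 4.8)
The plan is to reduce everything to a short Diophantine analysis of the integers $c_1,\dots,c_4$ defined by $c_i(X)=c_i\ell^i$, where $\ell$ generates $H^2(X,\Z)$ and $L^4=1$. First I would record the two constraints coming from \eqref{cn}, namely $c_4=5$ and $c_1c_3=50$; in particular $c_1$ is a nonzero divisor of $50$, and by the parity congruence \eqref{c1} of Lemma~\ref{l1} it is odd, so $c_1\in\{\pm1,\pm5,\pm25\}$. The decisive extra input is the Todd genus: since $X$ has the Hodge numbers of $\P^4$ we have $\chi(X,\cO_X)=1$, and evaluating \eqref{holt} at $m=0$ gives $\td_4(X)=1$. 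Substituting $c_4=5$ and $c_1c_3=50$ into $\td_4=\tfrac1{720}(-c_1^4+4c_1^2c_2+3c_2^2+c_1c_3-c_4)$ turns this into the single Diophantine equation
\begin{equation*}
3c_2^2+4c_1^2c_2-c_1^4=675,
\end{equation*}
a conic in $(c_1,c_2)$ that I can test against each of the finitely many admissible values of $c_1$.

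Next I would split into the two sign cases. If $c_1>0$, Kobayashi--Ochiai \eqref{koo} forces $c_1\le 5$, leaving only $c_1\in\{1,5\}$. For $c_1=5$ equality holds in \eqref{koo}, so $X\isom\P^4$ (and indeed the displayed equation then yields $c_2=10$, as it must). For $c_1=1$ the equation becomes a quadratic in $c_2$ whose discriminant is not a perfect square, so there is no integer solution and this case is impossible.

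If $c_1<0$, then $X$ is of general type and Yau's inequality \eqref{yauu} applies; for $n=4$ it reads, after evaluating the top class (recall $\ell^4=L^4=1$) and dividing by $c_1^2>0$, simply $c_2\ge\tfrac25c_1^2$, with equality exactly when $X$ is a ball quotient. Among $c_1\in\{-1,-5,-25\}$ the displayed equation again has no integer root for $c_1=-1$ and $c_1=-25$ (non-square discriminants), while $c_1=-5$ forces $c_2=10$. In the latter case $c_2=10=\tfrac25\cdot25$, so equality holds in \eqref{yauu} and $X$ is a compact quotient of $\B^4$; note $n=4$ is even, consistent with the equality clause of \eqref{yauu}.

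I expect the only delicate points to be bookkeeping rather than geometry: one must ensure the candidate list for $c_1$ is complete (all odd divisors of $50$) and that each spurious value is genuinely killed, and one must correctly read off the single surviving negative case as the Yau-equality (hence ball-quotient) case. Pleasantly, and in contrast to the threefold argument, no appeal to the classification of special Fano varieties or to an auxiliary Hodge-number computation seems necessary here: the parity constraint \eqref{c1} together with the Todd relation eliminates every candidate except $c_1=5$ and $c_1=-5$ outright.
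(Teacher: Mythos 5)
Your proposal is correct and follows essentially the same route as the paper: the relations $c_4=5$, $c_1c_3=50$ from \eqref{cn}, oddness of $c_1$ from \eqref{c1}, the bound \eqref{koo}, the Todd-genus equation $3c_2^2+4c_1^2c_2-c_1^4=675$ from \eqref{holt} at $m=0$, elimination of $c_1\in\{\pm1,-25\}$ by the non-square discriminant, and the two surviving cases $c_1=5$ (equality in \eqref{koo}, so $X\isom\P^4$) and $c_1=-5$, $c_2=10$ (equality in \eqref{yauu}, so $X$ is a quotient of $\B^4$). The only difference is organizational — you split by the sign of $c_1$ before checking discriminants, while the paper checks all candidates at once — which changes nothing of substance.
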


 Four examples  of compact quotients $X$ of $\B^4$ with the same Betti numbers as $\P^4$ are known, but  the groups $H_1(X,\Z)$ are never zero (\cite[Theorem 4]{py2}) hence they do not have the same  integral cohomology ring as  $\P^4$. It is therefore possible that the second case of the theorem never occurs.

\begin{proof} 
 If $\ell$ is a positive generator of $H^2(X,\Z)$, we write as before  $c_i(X)=c_i\ell^i$.
  Equations  \eqref{cn}    give $c_4=5$ and $c_1  c_3 =50$. By Lemma \ref{l1}, $c_1$ is odd, so by \eqref{koo}, we are reduced to $c_1\in\{ \pm 1,\pm 5, -25\}$.
  
  Equation  \eqref{cn}    reads $c_4 =5 $.   Equation \eqref{holt} with $m=0$ then  gives (using the values for the Todd polynomials given  in  Section~\ref{seco}) 
  $$1=\chi(X,\cO_X)=\tfrac{1}{720}\bigl(-c_1^4 + 4c_1^2 c_2+ 3 c_2^2 +  c_1 c_3-  c_4 \bigr)=\tfrac{1}{720}\bigl( -c_1^4 + 4c_1^2 c_2+ 3 c_2^2 +   50-5 \bigr)
  $$
  so $c_2$ is an integral root of the quadratic equation 
  $$3c_2^2+4c_1^2c_2-c_1^4-675=0.$$
 Its (reduced) discriminant $ 7c_1^4+2025 $ is therefore a square, which leaves only (among our possible values) the  cases $c_1=\pm 5$, $c_2=10$.
  
If $c_1=5$, the fourfold $X$ is isomorphic to $\P^4$ by \eqref{koo}. If $c_1=-5$, there is equality in \eqref{yauu}   and $X$ is 
a quotient of $\B^4$.
\end{proof} 
  
   \section{Fivefolds}
   
       \begin{theo} [\cite{fuj1,lw}] 
Any  compact 
K\"ahler manifold with the same  integral cohomology ring as  $\P^5$ is 
  isomorphic to $\P^5$.
 \end{theo}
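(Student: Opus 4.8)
The plan is to push the numerical method of the previous cases as far as it will go and then dispose of the few surviving possibilities geometrically. Write $c_i(X)=c_i\ell^i$ with $\ell$ the positive generator of $H^2(X,\Z)$ and $\ell^5=1$. Then \eqref{cn} gives $c_5=6$ and $c_1c_4=90$, while \eqref{c1} forces $c_1$ to be even. Evaluating \eqref{holt} at $m=0$ and using $\chi(X,\cO_X)=1$ gives the Todd relation $\td_5(c_1,\dots,c_5)=1$, that is (Section~\ref{seco})
\[
-c_1^3c_2+3c_1c_2^2+c_1^2c_3-c_1c_4=1440 .
\]
Because $n=5$ is odd, the equality (ball quotient) case of \eqref{yauu} is impossible, so the goal is to show that $X\isom\P^5$ is the only outcome.

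The new feature, absent for $n\le4$, is that Hodge symmetry leaves only three independent Hirzebruch--Riemann--Roch relations among $c_1,\dots,c_5$: once $c_5=6$, $c_1c_4=90$ and $\td_5=1$ are imposed, the pair $(c_2,c_3)$ satisfies a single equation, so for each admissible $c_1$ a whole one-parameter family in $c_2$ survives. Hence the parity-and-discriminant bookkeeping that settled $\P^2$--$\P^4$ is no longer sufficient, and positivity together with classification must be brought in.

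In the Fano range $c_1>0$, \eqref{koo} gives $c_1\le6$; being even and a divisor of $90$, $c_1\in\{2,6\}$, and $c_1=6$ yields $X\isom\P^5$ by \eqref{koo}. For $c_1=2$ I would exploit that $-K_X=2L$: by Kodaira vanishing $H^i(X,L^m)=0$ for $i>0$ and $m\ge-1$, so $h^0(X,L^m)=\chi(X,L^m)$ is the explicit polynomial \eqref{holt}. Fujita's inequality $\Delta(X,L)=5+L^5-h^0(X,L)\ge0$, combined with the fact that $\Delta=0$ forces $X\isom\P^5$ and with the positivity of $h^0(X,L^2)=\chi(X,L^2)$, confines $c_2$ to a short finite list (further trimmed by \eqref{c1c2}); each value is then excluded by the classification of polarized manifolds of small $\Delta$-genus (\cite{fuj}), just as the $c_1=2$ possibility for $\P^3$ was killed by exhibiting $h^{1,2}=21$. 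Concretely, the ladder of general members of $|L|$ descends to a Calabi--Yau threefold and then to a surface of general type with $K^2=1$, whose invariants one checks against those forced by the cohomology ring of $\P^5$.

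The residual, and hardest, case is $c_1<0$, where $K_X=|c_1|L$ is ample and $c_1\in\{-2,-6,-10,-18,-30,-90\}$. Now \eqref{yauu} is strict and yields the lower bound $c_2>\tfrac{5}{12}c_1^2$, and Kodaira vanishing gives $\chi(X,L^m)=h^0(X,L^m)\ge0$ for $m>|c_1|$, which rules out the smallest admissible values of $c_2$. The obstacle is that, unlike the Fano case, no upper bound for $c_2$ is visible: the Miyaoka--Yau inequality only bounds $c_2$ from below, and for $c_1<0$ one has $\chi(X,L)\neq h^0(X,L)$, so Fujita's bound no longer converts into a bound on $c_2$. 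Excluding these hypothetical general-type manifolds with the rational cohomology of $\P^5$ therefore appears to require a genuine non-existence input rather than further bookkeeping, and this---together with the finite but delicate analysis of $c_1=2$---is where I expect essentially all the difficulty to lie.
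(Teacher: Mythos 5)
Your setup coincides with the paper's: $c_5=6$, $c_1c_4=90$, $c_1$ even, the quadratic Todd relation in $(c_2,c_3)$, the observation that the ball-quotient alternative cannot occur in odd dimension, and the reduction of the Fano range to $c_1\in\{2,6\}$ with $c_1=6$ forcing $X\isom\P^5$ via \eqref{koo}. But there are two genuine gaps, and they sit exactly where the proof has to happen. For $c_1=2$ you only sketch an appeal to Fujita's $\Delta$-genus classification; note that $-K_X=2L$ in dimension $5$ means coindex $4$, so the del Pezzo classification that killed $c_1=2$ for $\P^3$ does not apply, and you never bound $\Delta(X,L)$ into a range where a classification actually exists. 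More seriously, you explicitly concede the general-type cases $c_1\in\{-2,-6,-10,-18,-30,-90\}$, asserting that they require a genuine non-existence input rather than further bookkeeping; as written, the proposal is therefore not a proof of the statement.

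In fact bookkeeping does suffice, via two tools you never invoke. First, reduce the quadratic relation $3c_1c_2^2-c_1^3c_2+c_1^2c_3-1530=0$ modulo prime powers: modulo $27$ it excludes every $c_1\equiv 0\pmod 9$ (so $-18$ and $-90$), and modulo $25$ it excludes $c_1=-30$ outright, while for $c_1=-10$ it leaves $c_2^2\equiv-1\pmod 5$, which then contradicts \eqref{holt} at $m=1$ taken modulo $5$. Second --- and this is the key idea you are missing --- use Hirzebruch--Riemann--Roch for the twisted tangent bundle, \eqref{tholt}: the integrality of $\chi(X,T_X\otimes L^{\pm1})$, computed after eliminating $c_3$ with the quadratic relation and writing $c_2$ in the residue class modulo $12$ prescribed by \eqref{c1c2}, yields contradictions modulo $24$ for $c_1=\pm2$ and modulo $8$ for $c_1=-6$. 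You correctly observed that in odd dimension Hodge symmetry leaves too few relations among the $\chi(X,L^m)$; the resolution is that $\chi(X,T_X\otimes L^m)\in\Z$ supplies congruences that are not consequences of those relations, and it disposes of the Fano case $c_1=2$ and of all the residual general-type cases with no classification theory and no new non-existence theorem.
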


\begin{proof} 
 If $\ell$ is a positive generator of $H^2(X,\Z)$, we write as before  $c_i(X)=c_i\ell^i$.
  Equations  \eqref{cn}    give $c_5=6$ and  $c_1  c_4 =90$ and, by   \eqref{c1}, $c_1$ is even. 
  The relation $\chi(X,\cO_X)=T^0_5(c_1,\dots,c_5)$ (Section~\ref{se1.1}) gives
  $$1=\chi(X,\cO_X)=-\tfrac{1}{1440}\bigl(c_1^3  c_2 - 3 c_1  c_2^2 -  c_1^2  c_3 + c_1  c_4\bigr)=-\tfrac{1}{1440}\bigl( c_1^3  c_2 - 3 c_1  c_2^2 -  c_1^2  c_3 + 90 \bigr)
  $$
  so $c_2$ is an integral root of the quadratic equation 
\begin{equation}\label{quad}
3c_1c_2^2-c_1^3c_2+c_1^2c_3-1530=0.
\end{equation}
  
  Moreover, 
the   congruence \eqref{c1c2} reads
\begin{equation}\label{cong}
 c_1^2-3c_1+c_2+3\equiv 0\pmod{12}.
\end{equation}

 If $c_1\equiv 0\pmod9$, we obtain, reducing  \eqref{quad} modulo 27, the contradiction $1530\equiv 0\pmod{27}$. Using \eqref{koo}, it follows that the possible values for $c_1$ are $\pm 2, \pm 6  , - 10,  - 30$.
 
  We rules these cases out one by one.

  \medskip
    \noindent{\bf Case $c_1=-30$.} Reducing \eqref{quad} modulo 25, we obtain $3\cdot (-30) c_2^2\equiv 1530\equiv 5\pmod{25}$, hence 
    $1\equiv 3\cdot (-6) c_2^2\equiv 2 c_2^2 \pmod{5}$, which is impossible.
   
   \medskip 
 \noindent{\bf Case $c_1=-10$.} Reducing \eqref{quad} modulo 25, we obtain $3\cdot (-10) c_2^2\equiv 1530\equiv 5\pmod{25}$, hence 
    $ c_2^2 \equiv-1\pmod{5}$. We have $c_4=-9$. Equation
   \eqref{holt} reads, for $m=1$,
\begin{eqnarray*}
720\chi(X,  L)&=&720+   (-c_1^4 + 4c_1^2 c_2+ 3 c_2^2 +  c_1 c_3-  c_4) + 15c_1c_2+ 10(c_1^2+c_2 )+ 15 c_1+ 6\\
&\equiv& 9 +3\cdot (-1)+ 6\pmod5,
 \end{eqnarray*}
which is absurd.
    
    \medskip      
   \noindent{\bf Case $c_1=\pm 2$.}  By \eqref{cong}, we can write $c_2=12d_2-1$.

Assume $c_1=2$.  Substituting $c_5=6$, $c_4=45$, and   $c_3=(1530-6c_2^2+8c_2)/4$ (obtained from \eqref{quad}) in \eqref{tholt} with $m=-1$, we compute, following \cite{fuj1},
   $$
24\chi(X,T_X\otimes L^{-1})= -3 c_2^2 + 2c_2 + 731=-3 (12 d_2-1)^2 + 2(12 d_2-1) + 731
 \equiv 6\pmod{24},
   $$
   which is absurd. When $c_1=-2$, we obtain similarly the contradiction
$$
24  \chi(X,T_X\otimes L)=3 c_2^2 - 2c_2 + 727\equiv 12\pmod{24}.
$$
 
  \medskip 
    \noindent{\bf Case $c_1=-6$.} By \eqref{cong}, we can write $c_2=12 d_2+3$. 
 We compute, using \eqref{quad} and \eqref{tholt} with $m=-1$ again,
   \begin{eqnarray*}
24  \chi(X,T_X\otimes L )&=& 45c_2^2 - \tfrac{520}{3}c_2 - 171=  45(12 d_2+3)^2 - \tfrac{520}{3}(12 d_2+3) - 171\\
 &\equiv& 45\cdot 9-520\cdot 4  d_2-520-  171 \pmod{24},
   \end{eqnarray*}
but this is absurd since this last number is $\equiv  2 \pmod8$.
%
\end{proof}

      \section{Sixfolds}

       \begin{theo}[\cite{lw}] 
Any  compact 
K\"ahler manifold with the same  integral cohomology ring as  $\P^6$ is 
\begin{itemize}
\item either  isomorphic to $\P^6$;
\item or a quotient of the unit ball $\B^6$.
\end{itemize}\end{theo}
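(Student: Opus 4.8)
The plan is to follow the template of the $\P^4$ and $\P^5$ proofs, now carrying three a priori unknown coefficients $c_2,c_3,c_4$. Writing $c_i(X)=c_i\ell^i$ for $\ell$ a positive generator of $H^2(X,\Z)$, equations~\eqref{cn} give $c_6=7$ and $c_1c_5=147=3\cdot 7^2$. By~\eqref{c1} the integer $c_1$ is odd, and as $c_1\mid147$ it lies in $\{\pm1,\pm3,\pm7,\pm21,\pm49,\pm147\}$; the bound~\eqref{koo} removes every positive value but $1,3,7$, with $c_1=7$ already giving $X\isom\P^6$. It therefore remains to eliminate $c_1\in\{1,3,-1,-3,-21,-49,-147\}$ and to show that $c_1=-7$ produces a ball quotient.

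First I would impose the full set of Hodge identities $\chi^p(X)=\chi^p(\P^6)=(-1)^p$ for $p=0,1,2,3$, equivalently the coefficientwise identity~\eqref{tnpn} for $t_6$, using the explicit $\td_6$ and $T_6^p$ from Section~\ref{seco}. Beyond~\eqref{cn} these furnish two relations among $c_2,c_3,c_4$; since $c_4$ enters linearly one solves for it and substitutes, leaving a single relation between $c_2$ and $c_3$ in which $c_3$ appears quadratically. The essential new difficulty is that, unlike in the $\P^4$ case, the Hodge data do not determine $c_2$: one degree of freedom survives, so no single Hirzebruch--Riemann--Roch equation suffices.

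To close the gap I would invoke the integrality of the Hilbert polynomials. Reducing $\chi(X,L^m)$ from~\eqref{holt} and $\chi(X,T_X\otimes L^m)$ from~\eqref{tholt} modulo small integers---exactly as in the $\P^5$ argument, where $24\,\chi(X,T_X\otimes L^{\pm1})$ was taken modulo $24$ and $8$---and combining this with the congruences~\eqref{c1} and~\eqref{c1c2}, I expect a numerical obstruction for each remaining candidate. For the two survivors, $c_1=7$ gives $X\isom\P^6$ by~\eqref{koo}, while for $c_1=-7$ the admissible solution satisfies $c_2=21$, hence $7c_2=3c_1^2$ and equality holds in Yau's inequality~\eqref{yauu}; thus $X$ is a compact quotient of $\B^6$.

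The main obstacle is twofold. The extra free coefficient destroys the clean ``discriminant is a square'' reduction available for $\P^4$, so the case analysis becomes heavier and leans on the machine-computed degree-$6$ polynomials of Section~\ref{seco}. The most delicate candidates are $c_1\in\{-21,-49,-147\}$: each makes $c_2=3c_1^2/7$ an integer, so equality in~\eqref{yauu} cannot be ruled out by Yau's inequality alone; one must instead extract a congruence obstruction from~\eqref{tholt}, or invoke Hirzebruch proportionality, by which a genuine $\B^6$-quotient has all its Chern numbers equal to those of $\P^6$ and hence $c_1=-7$. The same integrality input is what must exclude solutions with $c_2>21$ for the negative values of $c_1$, ensuring that the only general-type possibility is the ball quotient.
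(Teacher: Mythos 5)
Your skeleton coincides with the paper's up to the crucial step: equations \eqref{cn} give $c_6=7$ and $c_1c_5=147$, the bound \eqref{koo} together with $c_1\mid 147$ (and $c_1$ odd) leaves the list you state, and comparing coefficients of $t_6$ yields two relations (the paper's \eqref{0} and \eqref{1}) from which $c_4$, appearing linearly, is eliminated, leaving the quadratic \eqref{quadeq} in $c_3$ with coefficients depending on $c_1,c_2$. The genuine gap is in how you dispose of the surviving free parameter $c_2$. Your plan --- reduce $\chi(X,L^m)$ and $\chi(X,T_X\otimes L^m)$ modulo small integers and ``expect a numerical obstruction for each remaining candidate'' --- is not a proof, and it is precisely the step that cannot be carried out in general: the paper uses such congruences only for the cases $3\mid c_1$ (a mod $3$, $9$, $27$ analysis of \eqref{cog}, \eqref{+} and \eqref{0}, which kills $\pm3,\pm21,\pm147$), and its own sevenfold section shows the limits of the method, since there the analogous congruences fail to exclude $c_1\in\{\pm2,\pm4\}$. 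For the remaining cases the paper needs a different idea: the coefficients of \eqref{quadeq} are arranged so that $15c_2+8c_1^2$ divides $a_1$ and $a_2$, hence divides $a_0$, hence divides the \emph{fixed} integer $R(c_1)=c_1^2(6758c_1^6-40186125)$. This reduces $c_2$ to finitely many values for each $c_1$, after which one checks whether the discriminant of \eqref{quadeq} is a perfect square. That computer-assisted check is what eliminates $c_1=\pm1$ and $c_1=-49$, and what proves that for $c_1=-7$ the only possibility is $c_2=21$, giving equality in \eqref{yauu} and the ball-quotient conclusion. In your write-up the sentence ``for $c_1=-7$ the admissible solution satisfies $c_2=21$'' is asserted, not derived; it is exactly the content of this divisibility argument, and without it neither the exclusions nor the ball-quotient case is established.

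A second, smaller defect: invoking Hirzebruch proportionality for $c_1\in\{-21,-49,-147\}$ does not do what you need. Proportionality applies only once one knows $X$ is covered by the ball, i.e.\ after equality in \eqref{yauu} is established; for these values of $c_1$ it shows merely that $X$ \emph{cannot} be a ball quotient, so that the inequality in \eqref{yauu} is strict and $c_2>\tfrac37c_1^2$. Since the theorem asserts a dichotomy, you must still prove that no such $X$ exists at all, and proportionality yields no contradiction. In the paper, $-21$ and $-147$ do fall to the mod-3 congruence (so your toolkit suffices there), but $-49$ genuinely requires the divisibility-plus-discriminant computation, with $R(-49)=7^6\cdot 37\cdot 251\cdot 1559\cdot 131849$ and a case-by-case verification that no admissible divisor makes the discriminant a square.
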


\begin{proof} 
 If $\ell$ is a positive generator of $H^2(X,\Z)$, we write as before  $c_i(X)=c_i\ell^i$.
  Equations \eqref{cn}    gives $c_6=7$ and $c_1  c_5 =3\cdot 7^2=147$. From the fact that the polynomial $t_6(y;c_1,\dots,c_6)$ is the same for $X$ and $\P^6$, we obtain
  \begin{eqnarray*}
720\tbinom{7}{5}\hskip-2mm&=&\hskip-2mm -  c_1^3  c_3 + 3  c_1  c_2  c_3 +   c_1^2  c_4 - 3 c_3^2 + 3  c_2  c_4 + 69 c_1  c_5 + 186  c_6,  \\
60480\tbinom{7}{7}\hskip-2mm&=&\hskip-2mm 2 c_1^6 - 12  c_1^4  c_2 +  11 c_1^2  c_2^2 + 5  c_1^3  c_3 + 10  c_2^3 +  11  c_1  c_2  c_3 - 5  c_1^2  c_4 -    c_3^2 - 9c_2  c_4 - 2  c_1  c_5 + 2c_6.
\end{eqnarray*}  
Plugging in the values $c_6=7$ and $c_1  c_5 = 147$, we obtain (\cite[p.\ 150]{lw})
  \begin{eqnarray}
  -  c_1^3  c_3 + 3  c_1  c_2  c_3 +   c_1^2  c_4 - 3 c_3^2 + 3  c_2  c_4   &=& 3675, \label{0}\\
2 c_1^6 - 12  c_1^4  c_2 +  11 c_1^2  c_2^2 + 5  c_1^3  c_3 + 10  c_2^3 +  11  c_1  c_2  c_3 - 5  c_1^2  c_4 -    c_3^2 - 9c_2  c_4  &=& 60760.\label{1}
\end{eqnarray} 
Eliminating $c_4$ between these two equations, we see that $c_3$ is a solution of the quadratic equation
\begin{equation}\label{quadeq}
a_2c_3^2+a_1c_3+a_0=0,
\end{equation}
where 
\begin{eqnarray*}
a_2&=&2(15+8c_1^2),\\
a_1&=& -4c_1c_2(15c_2+8c_1^2),\\
a_0&=&-30c_2^2-43c_1^2c_2^3+25c_1^4c_2^2+6c_1^6c_2+215355c_2-2c_1^8+79135c_1^2.
\end{eqnarray*}
In particular, $15c_2+8c_1^2$ divides $a_0$, hence also the remainder of the  division of $1125a_0$ by $15c_2+8c_1^2$, which is
$$R(c_1):=c_1^2(6758c_1^6-40186125).
$$

Moreover, 
Equation \eqref{holt}    gives
that the   polynomial 
\begin{eqnarray*}
P(m)&=&m\frac{1}{1440}\bigl( -c_1c_4+c_1^2c_3+3c_1c_2^2 -c_1^3c_2 \bigr) +  \frac{m^2}{2}\frac{1}{720}\Bigl(-c_4+c_1c_3+3c_2^2+4c_1^2c_2-c_1^4\Bigr)\\
&& \hskip 1cm {}+ \frac{m^3}{6}\cdot\frac{1}{24}c_1c_2+ \frac{t^4}{24}\cdot\frac{1}{12}\Bigl(c_1^2+c_2\Bigr)+ \frac{m^5}{120}\cdot\frac{1}{2}c_1+ \frac{m^6}{720} 
 \end{eqnarray*}
 takes integral values for all integers $m$. In particular, 
\begin{equation}\label{+}
720(P(1)+P(-1))=-c_4+c_1c_3+3c_2^2+4c_1^2c_2-c_1^4  + 5   (c_1^2+c_2 )+2  \equiv 0\pmod{720}.
\end{equation}
Finally, by \eqref{c1c2}, we have the congruence
\begin{equation}\label{cog}
   c_1^2+c_2+6c_1 \equiv 4\pmod{12}.
\end{equation}

   \medskip
\noindent{\bf Case     $  c_1\equiv 0\pmod3$. } We get $c_2  \equiv 1\pmod3$ (from \eqref{cog}), 
$    c_4  \equiv 1\pmod{3}$ (from \eqref{+}), and $  c_3 \equiv 0\pmod{3}$ (from \eqref{0}).

 We write $c_1=3d_1$, $c_2=3d_2+1$, and $c_4=3d_4+1$.  We have 
$  -c_4 + 3 +5c_2  +2\equiv0\pmod{9}$ (from \eqref{+} again), \ie,  $ d_2+ d_4  \equiv  0 \pmod{3}$.  
Moreover,
using \eqref{0} modulo 27, we obtain
$9d_1^2(3d_4+1)
+3(3d_2+1)(3d_4+1) \equiv 3\pmod{27}
$ (from \eqref{0} again), \ie,
 $d_4+d_2 + d_1^2 \equiv 0\pmod{3}
$. This gives $d_1 \equiv 0\pmod{3}$, which is impossible, since $d_1$ is a power of $7$.
\medskip

The case $c_1=49$ being excluded by \eqref{koo}, there remains to consider the cases  $c_1=-49$,  $c_1=\pm1$, and $c_1=\pm7$. In all these cases, we have $c_2  \equiv -3\pmod{12}$ by \eqref{cog} and we write $c_2=12e_2-3$.  We saw above that  $R(c_1) $ must be divisible by $15c_2+8c_1^2=180e_2+8c_1^2-45$.

   \medskip
\noindent{\bf Case  $c_1=-49$.} The integer  $d=15c_2+8c_1^2=180e_2-45+8\cdot 49^2$ is   positive by \eqref{yauu} and divides
$R(c_1)= 7^6\cdot 37\cdot 251\cdot 1559\cdot 131849$.  A  computer check gives us  all the   positive divisors $d$ of $R(c_1)$ for which   $e_2$ is an integer;  we then compute the discriminant $D$ of the quadratic equation \eqref{quadeq}. We find   
\begin{itemize}
\item $d=37\cdot 1559$, for which $e_2=214$ and $D=2^7 \cdot 37 \cdot 1559 \cdot 7087681 \cdot 21780337$;
\item $d=7\cdot 131849$, for which $e_2=5021$  and $D= 2^8 \cdot 7^3 \cdot 13 \cdot 193 \cdot 131849 \cdot 9694436995073$;
\item $d=7^3\cdot 251\cdot 1559\cdot 131849 $, for which $e_2=98314662210$ and $D= 2^7 \cdot 7^8 \cdot 17 \cdot 251 \cdot 317 \cdot 1559 \cdot 131849 \cdot 165057229 \cdot
1203263426047496730660859$.
\end{itemize}
 In each case,   $D$  is not a perfect square hence the system of equations \eqref{0} and \eqref{1} has no integral solutions.

   \medskip
\noindent{\bf Case   $c_1=\pm1$.} We have $R(c_1)=-23 \cdot 1746929$ and 
there are no divisors of $R(c_1)$ for which   $e_2$ is an integer.

   \medskip
\noindent{\bf Case $c_1= 7$.}  There is then   equality in \eqref{koo}   and $X$ is 
isomorphic to $\P^6$.

   \medskip
\noindent{\bf Case $c_1=-7$.} We have $R(c_1)=7^4 \cdot 101 \cdot 152533$. The only divisor of $R(c_1)$ for which   $e_2$ is an integer is $7\cdot 101$, for which $e_2=2$ and $c_2=21$. There is then   equality in \eqref{yauu}   and $X$ is 
a quotient of $\B^6$.
   \end{proof}

 \section{Sevenfolds}
         
 \begin{theo}\label{th7}
Any  compact 
K\"ahler manifold $X$ with the same  integral cohomology ring as  $\P^7$ is 
  isomorphic to $\P^7$, unless $c_1(X)^7\in \{\pm 2^7,\pm 4^7\}$.
 \end{theo}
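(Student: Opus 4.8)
The plan is to run the strategy used for $n\le6$, now with $n=7$. Writing $c_i(X)=c_i\ell^i$ for a positive generator $\ell$ of $H^2(X,\Z)$, equations \eqref{cn} give $c_7=8$ and $c_1c_6=\tfrac12\cdot7\cdot8^2=224$, while \eqref{c1} forces $c_1$ to be even; since $\ell^7=L^7=1$ we have $c_1(X)^7=c_1^7$, so the exceptional set of the statement reads $c_1\in\{\pm2,\pm4\}$. Thus $c_1$ is an even divisor of $224=2^5\cdot7$. If $c_1>0$, then \eqref{koo} leaves only $c_1\in\{2,4,8\}$, with $c_1=8$ giving $X\isom\P^7$ and $c_1\in\{2,4\}$ among the allowed exceptions; so it remains to rule out the negative values $c_1\in\{-8,-14,-16,-28,-32,-56,-112,-224\}$ (the values $-2,-4$ being exceptions as well).

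For each such $c_1$ the integer $c_6=224/c_1$ is determined, and Yau's inequality \eqref{yauu}, which for the odd integer $n=7$ can never be an equality, gives the strict bound $c_2>\tfrac{7}{16}c_1^2$. The main input is \eqref{tnpn}: the polynomial $t_7(z;c_1,\dots,c_7)$, computed in Section~\ref{seco}, coincides with its value at $\P^7$. By the relation $T_7^p=-T_7^{7-p}$ only four of these coefficient identities are independent; two of them are $c_7=8$ and $c_1c_6=224$, so the remaining two give two further polynomial relations among $c_2,c_3,c_4,c_5$, the analogues of \eqref{0} and \eqref{1}. A weighted-degree count shows that $c_5$ and $c_4$ occur linearly and $c_3$ quadratically, so eliminating $c_5$ leaves a single relation, quadratic in $c_3$, among $c_2,c_3,c_4$.

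The rest is Diophantine. As for $n=6$, integrality of $c_4$ forces a divisibility condition; combining it by resultants with the quadratic relation, and using \eqref{c1c2} (which pins $c_2$ modulo $12$) together with the higher integrality congruences coming from \eqref{holt} and \eqref{tholt} (the analogues of \eqref{c1}, \eqref{c1c2}, \eqref{+} and of the constraints $\chi(X,T_X\otimes L^m)\in\Z$ exploited for $n=5$), one aims to reach a divisibility ``a fixed form in the Chern numbers divides a nonzero integer $R(c_1)$''. This bounds the remaining candidates to a finite list, for each of which a machine check that the relevant discriminant is not a perfect square shows that the system \eqref{0}--\eqref{1} has no integral solution; this disposes of the eight negative values above.

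The main obstacle is structural: unlike the cases $n\le6$, matching $t_7$ leaves \emph{two} fewer relations than unknowns once $c_1$ is fixed (four unknowns $c_2,c_3,c_4,c_5$ against two genuinely new identities), so elimination produces a two-parameter family rather than the one-parameter families of lower dimensions, and finiteness must be recovered solely from the integrality congruences and the divisor-bounding step. It is exactly this bounding that degenerates for $c_1=\pm2,\pm4$: there the governing integer $R(c_1)$ ceases to constrain the free parameters and the bound from \eqref{yauu} is weakest, so no contradiction can be extracted and these four cases remain open, which is what the statement records.
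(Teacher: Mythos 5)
Your setup is correct and agrees with the paper's: $c_7=8$, $c_1c_6=224$, $c_1$ even by \eqref{c1}, the positive cases settled by \eqref{koo}, and the right list of negative even divisors of $224$ left to exclude. But the heart of the theorem --- actually ruling out those eight negative values --- is never carried out; it is only described as a strategy, and the strategy you describe is the dimension-6 one, which, as you yourself concede in your final paragraph, breaks down structurally here: once $c_1$ is fixed there are four unknowns $c_2,c_3,c_4,c_5$ and only two genuinely new relations \eqref{75} and \eqref{77}, so eliminating $c_5$ leaves one equation in three unknowns, a two-parameter family, and no single divisibility of the shape ``a fixed form in the Chern numbers divides a nonzero integer $R(c_1)$'' can cut this down to a finite list. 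Writing that ``one aims to reach a divisibility'' after which ``a machine check'' of a discriminant finishes (and citing the dimension-6 system \eqref{0}--\eqref{1} rather than \eqref{75}--\eqref{77}) is a hope, not an argument: no such divisibility is exhibited, and it is precisely because this route fails that the paper abandons it in dimension 7.

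What the paper actually does is purely congruence-theoretic, and none of it appears in your proposal. For the five values with $7\mid c_1$ (namely $-14,-28,-56,-112,-224$), writing $c_1=7d_1$, reducing \eqref{77} modulo $7$ and combining with the integrality of $\chi(X,L)$ (equation \eqref{lm} with $m=1$) modulo $7$ yields $d_1\equiv 2\pmod 7$, which contradicts $d_1\mid 2^5$ and $d_1\le 1$; you never isolate these cases at all, and it is unclear your divisor-bounding scheme could touch them. For $c_1\in\{-8,-16,-32\}$ the paper runs a $2$-adic parity chase: \eqref{cog7} forces $4\mid c_2$, integrality of $\chi(X,L)$ modulo $32$ forces $c_3$ even, reducing \eqref{77} modulo $2^7$ kills $c_1=\pm 32$, and for $c_1=-8$ and $-16$ successive parity deductions from \eqref{75}, \eqref{77} and from the integrality of $\chi(X,L^{-1})$ and $\chi(X,L^{-2})$ end in contradictions. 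So the proposal has a genuine gap: the reduction to cases is right, but the case-by-case elimination, which is the entire content of the proof, is missing, and the method proposed in its place cannot work for the reason you yourself identify.
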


 \begin{proof} 
      If $\ell$ is a positive generator of $H^2(X,\Z)$, we write as before  $c_i(X)=c_i\ell^i$.
  Equations  \eqref{cn}      give  $c_7=8$ and $c_1  c_6 =  2^5\cdot 7=224$ and, by   \eqref{c1}, $c_1$ is even.  
  From the fact that the polynomial $t_7(y;c_1,\dots,c_7)$ is the same for $X$ and $\P^7$, we obtain, comparing the coefficients of $y^5$ and $y^7$ and plugging in the values $c_7=8$ and $c_1  c_6 = 224$, the equations
 \begin{eqnarray}
0 \hskip-2mm&=&\hskip-2mm   	c_1^3 c_4 - 3 c_1 c_2 c_4 - c_1^2 c_5 + 3 c_3 c_4 - 3 c_2 c_5 + 7728
, \label{75}\\
0\hskip-2mm&=&\hskip-2mm -2 c_1^5 c_2 + 10 c_1^3 c_2^2 + 2 c_1^4 c_3 - 10 c_1 c_2^3 - 11 c_1^2 c_2 c_3 - 2 c_1^3 c_4 + c_1 c_3^2 + 9 c_1 c_2 c_4 \nonumber\\
&&\hskip35mm{} + 2 c_1^2 c_5 + 120512  \label{77},
\end{eqnarray} 
with 
$7728 =2^4 \cdot 3 \cdot 7 \cdot 23 $ and $ 120512=2^6 \cdot 7 \cdot 269$.

By \eqref{c1c2}, we also have the congruence
\begin{equation}\label{cog7}
   c_1^2+c_2+3c_1 \equiv 8\pmod{12}.
\end{equation}

We also compute, for all integers $m$,
  \begin{eqnarray}
60480\chi(X,L^m)&=&12 m^7 
+ 42 m^6 c_1 
+ 42 m^5 (c_1^2 +     c_2)
+ 105 m^4 c_1 c_2\nonumber\\
&&{}
+  2m^3(-7 c_1^4 + 28   c_1^2 c_2+ 21   c_2^2 + 7   c_1 c_3 - 7   c_4)
\nonumber\\
&&{}+  m^2 (-21 c_1^3 c_2 + 63   c_1 c_2^2  + 21 c_1^2 c_3 -
21   c_1 c_4)
\nonumber\\
&&{}+  m( 2   c_1^6     - 12   c_1^4 c_2 + 11   c_1^2 c_2^2 + 5   c_1^3 c_3 + 10   c_2^3 + 11   c_1 c_2 c_3  \label{lm} \\
&&\qquad{}- 5   c_1^2 c_4 -  c_3^2 - 
9   c_2 c_4 - 2  c_1 c_5 + 2   c_6)
\nonumber\\
&&{}+60480,\nonumber\end{eqnarray}
with $ 60480=2^6 \cdot 3^3 \cdot 5 \cdot 7$.

   \medskip
\noindent{\bf Case $7\mid c_1   $.} We write $c_1=7d_1$. Equation \eqref{77} divided by 7  gives
$$     - 10 d_1 c_2^3   + d_1 c_3^2 + 9 d_1 c_2 c_4   + 2^6\cdot 269 \equiv 0\pmod7, \text{ hence }
 d_1(3 c_2^3-c_3^2-2  c_2c_4 )\equiv    3  \pmod7,$$
 and, from  \eqref{lm} with $m=1$, we get  
$$24+2(10 c_2^3 - c_3^2 - 9 c_2 c_4  +2 c_6)\equiv 0\pmod7, \text{ hence }5+3 c_2^3 -c_3^2 - 2 c_2 c_4  +2 c_6  \equiv 0\pmod7.
$$
Together, these two congruences imply $  5 d_1+3    +2  d_1c_6 \equiv 0\pmod7$. Since $ d_1 c_6=224/7=32$, we finally get  $  d_1\equiv 2\pmod7$. But $d_1\le 1$ by \eqref{koo} and $ d_1\mid 2^5$, and all these conditions are incompatible.

 \medskip
 Since $c_1$ must be even, we now would like to exclude the cases $c_1\in \{\pm 2,\pm 4,-8,-16,-32\}$. Unfortunately, playing around with congruences is not enough when $c_1\in \{\pm 2,\pm 4\}$ (even with \eqref{tholt}) and we were unable to exclude these cases.
 
 We therefore assume $c_1\in \{-8,-16,-32\}$ and write $c_1=4 d_1 $.
%

    \medskip
Congruence \eqref{cog7} implies   $ c_2  \equiv 0\pmod{4}$ and we  write $c_2=4  d_2$. Equation \eqref{lm} with $m=1$, taken modulo 32 and divided by 2, gives
  \begin{eqnarray}
0&\equiv&    - 2  d_1 c_3^2   -  8 d_1 d_2 c_4 +  8 d_1 c_3 -  c_3^2 -   20 d_1 c_4 -   4  d_2 c_4 - 8  d_1 c_5
  +  8 d_1 + 8 d_2 \nonumber \\
  &&{}-14 c_4 + 2 c_6 + 12 \pmod{16},
\label{eq9}\end{eqnarray}
hence $c_3$ is even; we write $c_3=2 d_3$. When $c_1=\pm 32$, we already get the  contradiction $0\equiv 2^6\pmod{2^7}$  by reducing equation \eqref{77} modulo $2^7$.

So we assume $(d_1,c_6)\in \{(-2,-28),(-4,-14)\}$ and we write $c_6=2 d_6$. We obtain from \eqref{eq9} that $c_4$ is even, we write $c_4=2 d_4$, and, after dividing by 4, we get
  \begin{eqnarray*}
0&\equiv&    - 2  d_1 d_3^2     -  d_3^2 -  2 d_1 d_4 - 2 d_2  d_4 - 2  d_1 c_5
  +  2  d_1 +  2  d_2 - 7d_4 +  d_6 + 3 \pmod{4}
\end{eqnarray*}
hence $ d_3\equiv  d_4+ d_6+1\pmod2$.

   \medskip
\noindent{\bf Case     $  c_1=-16$. } We have  
$ d_3\equiv  d_4 \pmod2$. Substituting the integers $d_2$, $d_3$, and $d_4$ into \eqref{77} and dividing by 64, we find
$$160  d_2^3 - 10240  d_2^2 - 352  d_2  d_3 -  d_3^2 - 18  d_2  d_4 + 131072  d_2 +
4096  d_3 + 256  d_4 + 8 c_5 + 1883=0,$$
 hence $ d_3\equiv   1\pmod2$. Doing the same with \eqref{75}, we obtain, after   dividing by 4,  
$$96  d_2 d_4 + 3 d_3 d_4 - 3 d_2   c_5 - 2048  d_4 - 64 c_5 + 1932=0,$$
 hence $ d_2\equiv c_5\equiv  1\pmod2$. So we write $ d_2=2e_2+1$, $ d_3=2e_3+1$, $ d_4=2e_4+1$, $  c_5=2d_5+1$,  and, substituting these variables into \eqref{lm} with $m=-2$, we obtain the contradiction
 $$756 \chi(X,L^{-2})
 =
 -640 e_2^3 + 7424 e_2^2 + 704 e_2 e_3 + 2 e_3^2 + 36 e_2 e_4 + 80818 e_2 +
290 e_3 + 14 e_4 - 8 d_5 - 713529 $$
(the right side is odd, but the left side is even).

   \medskip
\noindent{\bf Case     $  c_1=-8$. } We have  
$ d_3\equiv d_4 +1\pmod2$. Substituting the integers $d_2$, $d_3$, and $d_4$ into \eqref{77} and dividing by 32, we find
$$160  d_2^3 - 2560  d_2^2 - 176  d_2  d_3 -  d_3^2 - 18  d_2  d_4 + 8192  d_2 + 512  d_3 +
64  d_4 + 4 c_5 + 3766=0,$$
hence $ d_3\equiv 0\pmod2$ and $ d_4\equiv 1\pmod2$. We write $ d_3=2e_3$ and $ d_4=2e_4+1$. Doing the same with   \eqref{lm} with $m=-1$, we get
  \begin{eqnarray*}
  	60480\chi(X,L^{-1})&=&
-50  d_2^3 + 1310  d_2^2 + 880  d_2  e_3 + 20  e_3^2 + 90 d_2 e_4+ 45 d_2  + 22670 d_2 \\
&&\hskip55mm {} - 160 e_3
- 20  e_4 - 80 c_5 - 469710,
  \end{eqnarray*}
  hence $d_2\equiv  0\pmod 2$. We write $ d_2=2e_2$  and, substituting  into \eqref{77} and dividing by $64$, we obtain the contradiction
 $$  	
 640 e_2^3 - 5120 e_2^2 - 352 e_2 e_3 - 2 e_3^2 - 36 e_2 e_4 + 8174 e_2 + 512 e_3
+ 64 e_4 + 2 c_5 + 1915
=0
.$$
This finishes the proof of the theorem.
 \end{proof}

  \section{Computations}\label{seco}
  \allowdisplaybreaks
  
  We list here the   polynomials $t_n(z;c_1,\dots,c_n)$ (defined in \eqref{tn})  for $n\le 9$ (they were given for $n\le 6$ in \cite[p.\ 145]{lw})
  \begin{eqnarray*}
t_1\hskip-3mm&=&\hskip-3mm c_1-\tfrac{1}{2} c_1z\\
t_2\hskip-3mm&=&\hskip-3mm c_2 -c_2z+\tfrac{1}{12}( c_1^2 + c_2)z^2\\
 t_3\hskip-3mm&=&\hskip-3mm  c_3 -\tfrac{3}{2} c_3 z+\tfrac{1}{12}(c_1 c_2 + 6 c_3)z^2-
 \tfrac{1}{2 4} c_1 c_2 z^3\\
 t_4\hskip-3mm&=&\hskip-3mm   c_4 -2  c_4z+\tfrac{1}{12}(c_1 c_3 + 14  c_4)z^2+\tfrac{1}{12}(c_1 c_3 + 2 c_4)z^3+\tfrac{1}{720} ( -c_1^4 + 4c_1^2 c_2+ 3 c_2^2 +  c_1 c_3-  c_4)z^4\\
 t_5\hskip-3mm&=&\hskip-3mm  c_5
 -\tfrac{5}{2}  c_5z
+\tfrac{1}{12} ( c_1  c_4 + 25  c_5)z^2
-\tfrac{1}{8}(  c_1  c_4 +5  c_5)z^3
\\
&& {}+\tfrac{1}{720} ( -c_1^3  c_2 +3 c_1  c_2^2 +c_1^2  c_3 + 29 c_1  c_4 +30  c_5)z^4+\tfrac{1}{1440} ( c_1^3  c_2 - 3 c_1  c_2^2 -  c_1^2  c_3 + c_1  c_4)z^5\\
  t_6\hskip-3mm&=&\hskip-3mm c_6
-3  c_6z
+\tfrac{1}{12} ( c_1  c_5 + 39  c_6)z^2
-\tfrac{1}{6} ( c_1  c_5 +9  c_6)z^3\\
&&{}
+\tfrac{1}{720} ( -c_1^3  c_3 +3 c_1  c_2  c_3 + c_1^2  c_4 - 3 c_3^2 +3 c_2  c_4 + 69  c_1  c_5 +186  c_6)z^4\\
&&{}
+\tfrac{1}{720}  (c_1^3  c_3 - 3c_1  c_2  c_3 -   c_1^2  c_4 + 3  c_3^2 - 3 c_2  c_4 - 9 c_1  c_5 - 6  c_6)z^5\\
&&{}
+\tfrac{1}{60480} ( 2c_1^6 - 12 c_1^4  c_2 +  11 c_1^2  c_2^2 +5 c_1^3  c_3 +   c_2^3 +  11 c_1  c_2  c_3 - 5 c_1^2  c_4\\
&&\hskip15mm {}-  c_3^2 - 9 c_2  c_4 - 2 c_1  c_5 + 2 c_6)z^6\\
   t_7\hskip-3mm&=&\hskip-3mm c_7
 -\tfrac{7}{2}  c_7z
+\tfrac{1}{12}  (c_1  c_6 + 56  c_7)z^2
-\tfrac{5}{24}(  c_1  c_6 + 14 c_7)z^3\\
&&{}
+\tfrac{1}{720}  (-c_1^3  c_4 + 3  c_1  c_2  c_4 + c_1^2  c_5- 3 c_3  c_4+3 c_2  c_5 +124  c_1  c_6+602 c_7)z^4\\
&&{}
+\tfrac{1}{480} ( c_1^3  c_4 -3 c_1  c_2  c_4 -  c_1^2  c_5 +3 c_3  c_4 - 3c_2  c_5 - 24 c_1  c_6 - 42  c_7)z^5\\
&&{}
+\tfrac{1}{60480}(2 c_1^5  c_2 - 10  c_1^3  c_2^2 - 2c_1^4  c_3 + 10 c_1  c_2^3 
+  11 c_1^2  c_2  c_3 - 40 c_1^3  c_4 -   c_1  c_3^2 \\
&&\hskip15mm {}+ 117 c_1  c_2  c_4 + 40  c_1^2  c_5 - 126  c_3  c_4 + 126c_2  c_5 + 170 c_1  c_6 + 84 c_7)z^6\\
&&{}
+\tfrac{1}{120960} ( - 2 c_1^5  c_2 + 10 c_1^3  c_2^2 +   2 c_1^4  c_3 -10 c_1  c_2^3 - 11 c_1^2  c_2  c_3- 2 c_1^3  c_4  \\
&&\hskip15mm {}+c_1  c_3^2+9c_1  c_2  c_4 +2c_1^2  c_5 -   2c_1  c_6)z^7\\
  t_8\hskip-3mm&=&\hskip-3mm c_8
-4  c_8z
+\tfrac{1}{12}  (c_1  c_7 + 76  c_8)z^2
-\tfrac{1}{4} (  c_1  c_7 + 20 c_8)z^3\\
&&{}
+\tfrac{1}{720}  (-c_1^3  c_5 + 3 c_1  c_2  c_5+  c_1^2  c_6 - 3 c_3  c_5+3 c_2  c_6+194  c_1  c_7+ 1458 c_8)z^4\\
&&{}
 +\tfrac{1}{360}(  c_1^3  c_5 - 3 c_1  c_2  c_5 -   c_1^2  c_6 + 3 c_3  c_5 - 3c_2  c_6 - 44  c_1  c_7 - 138  c_8)z^5\\
&&{}
 +\tfrac{1}{6048 0}(2 c_1^5  c_3 -10  c_1^3  c_2  c_3 -2  c_1^4  c_4 + 10 c_1  c_2^2  c_3 +10  c_1^2  c_3^2 +  c_1^2  c_2  c_4 - 96 c_1^3  c_5 - 10 c_2  c_3^2 
\\
&&  {} + 10 c_2^2  c_4 - 11  c_1  c_3  c_4 + 295 c_1  c_2  c_5 + 96 c_1^2  c_6 - 20  c_4^2 -  264c_3  c_5 + 284 c_2  c_6 \\
&&\hskip10mm {}+1206 c_1  c_7 + 1524 c_8)z^6\\
&&{}
+\tfrac{1}{60480} (-2 c_1^5  c_3 + 10 c_1^3  c_2  c_3 + 2 c_1^4  c_4 - 10 c_1  c_2^2  c_3 - 10 c_1^2  c_3^2 -   c_1^2  c_2  c_4 + 12 c_1^3  c_5 + 10 c_2  c_3^2  \\
&&  {}- 10 c_2^2  c_4+  11 c_1  c_3  c_4 - 43 c_1  c_2  c_5 - 12c_1^2  c_6 + 20 c_4^2 + 12 c_3  c_5 - 32 c_2  c_6\\
&&\hskip10mm {} -30 c_1  c_7 - 12 c_8)z^7\\
&&{}
 +\tfrac{1}{3628800}(-3  c_1^8 + 24 c_1^6  c_2 - 50  c_1^4  c_2^2 - 14 c_1^5  c_3 + 8  c_1^2  c_2^3 + 26 c_1^3  c_2  c_3 + 14 c_1^4  c_4 +21 c_2^4 
 \\
 &&\hskip10mm {}+ 50  c_1  c_2^2  c_3 + 3 c_1^2  c_3^2 - 19 c_1^2  c_2  c_4 - 7  c_1^3  c_5 - 8 c_2  c_3^2 -34  c_2^2  c_4 - 13 c_1  c_3  c_4  \\
&&\hskip10mm {}- 16 c_1  c_2  c_5 + 7 c_1^2  c_6 +5 c_4^2 + 3 c_3  c_5 + 13  c_2  c_6 + 3 c_1  c_7 -3 c_8)z^8\\
    t_9\hskip-3mm&=&\hskip-3mm c_9
  -\tfrac{9}{2}  c_9z
 +\tfrac{1}{12} ( c_1  c_8 + 99  c_9)z^2
+\tfrac{1}{24} (-7 c_1  c_8 -189  c_9)z^3\\
 &&\hskip-3mm{}+\tfrac{1}{720} (- c_1^3  c_6 + 3  c_1  c_2  c_6 +  c_1^2  c_7 - 3 c_3  c_6 + 3 c_2  c_7 + 279 c_1  c_8 + 2979c_9)z^4\\
 &&\hskip-3mm{}
 +\tfrac{1}{2 88} ( c_1^3  c_6 - 3  c_1  c_2  c_6 - c_1^2  c_7 +3 c_3  c_6 - 3 c_2  c_7 -69 c_1  c_8 - 333 c_9)z^5\\
 &&\hskip-3mm{}
 +\tfrac{1}{60480} (2 c_1^5  c_4 - 10  c_1^3  c_2  c_4 -2  c_1^4  c_5 + 10 c_1  c_2^2  c_4 + 10c_1^2  c_3  c_4 +   c_1^2  c_2  c_5 - 173 c_1^3  c_6 - 10c_2  c_3  c_4 
 \\
&&\hskip1cm {}
 - 10c_1  c_4^2 + 10c_2^2  c_5 -    c_1  c_3  c_5 + 526 c_1  c_2  c_6 
  \\
&&\hskip1cm {}+ 173  c_1^2  c_7 - 10 c_4  c_5 -  505 c_3  c_6 + 515 c_2  c_7 
 + 4041 c_1  c_8 + 9075c_9)z^6
\\
 &&\hskip-3mm{}+\tfrac{1}{40320} (-2 c_1^5  c_4 + 10  c_1^3  c_2  c_4 + 2 c_1^4  c_5 - 10 c_1  c_2^2  c_4 - 10 c_1^2  c_3  c_4 -    c_1^2  c_2  c_5 + 33  c_1^3  c_6 +10c_2  c_3  c_4 
 \\
&&\hskip1cm {} +10 c_1  c_4^2 -10 c_2^2  c_5 +  c_1  c_3  c_5 - 106  c_1  c_2  c_6 - 33c_1^2  c_7 + 10  c_4  c_5 + 85 c_3  c_6 - 95  c_2  c_7  \\
&&\hskip1cm {}- 261  c_1  c_8 - 255 c_9)z^7\\
 &&\hskip-3mm{}
 + \tfrac{1}{3628800}(-3 c_1^7  c_2 +21 c_1^5  c_2^2 +3c_1^6  c_3 - 42 c_1^3  c_2^3 - 29 c_1^4  c_2  c_3 
 \\
&&\hskip1cm {} + 57c_1^5  c_4 + 21c_1  c_2^4 +50  c_1^2  c_2^2  c_3 + 8  c_1^3  c_3^2 - 274c_1^3  c_2  c_4 - 57 c_1^4  c_5 - 8  c_1  c_2  c_3^2  \\
&&\hskip1cm {}+ 266  c_1  c_2^2  c_4 +287  c_1^2  c_3  c_4 + 14 c_1^2  c_2  c_5 - 153 c_1^3  c_6 - 300 c_2  c_3  c_4 - 295  c_1  c_4^2   \\
&&\hskip1cm {}+ 300 c_2^2  c_5- 27 c_1  c_3  c_5 + 673 c_1  c_2  c_6 + 153 c_1^2  c_7 - 300 c_4  c_5 - 30c_3  c_6 +  330 c_2  c_7  \\
&&\hskip1cm {}+ 267  c_1  c_8+90 c_9)z^8
 \\
 &&\hskip-3mm{}+\tfrac{1 }{7257600}(3c_1^7  c_2 - 21 c_1^5  c_2^2 - 3 c_1^6  c_3 + 42 c_1^3  c_2^3 + 29 c_1^4  c_2  c_3 +3 c_1^5  c_4  - 21c_1  c_2^4 \\
&&\hskip1cm {}- 50 c_1^2  c_2^2  c_3 - 8 c_1^3  c_3^2 - 26  c_1^3  c_2  c_4 - 3  c_1^4  c_5 +8c_1  c_2  c_3^2 + 34 c_1  c_2^2  c_4 \\
&&\hskip1cm {}+ 13c_1^2  c_3  c_4 +16 c_1^2  c_2  c_5 + 3 c_1^3  c_6 -5 c_1  c_4^2 - 3  c_1  c_3  c_5 -13  c_1  c_2  c_6 \\
&&\hskip1cm {}-3 c_1^2  c_7 + 3 c_1  c_8)z^9
\end{eqnarray*}


\begin{thebibliography}{PY2}

\bibitem[B]{buc} Buchdahl, N., 
On compact K\"ahler surfaces, {\em
Ann. Inst. Fourier (Grenoble)}  {\bf 49} (1999),  287--302. 

\bibitem[F1]{fuj1} Fujita, T.,
On topological characterizations of complex projective spaces and affine linear spaces, {\em
Proc. Japan Acad. Ser. A Math. Sci.} {\bf 56} (1980), 231--234. 

\bibitem[F2]{fuj} Fujita, T.,
On the structure of polarized manifolds with total deficiency one. III, 
{\em J. Math. Soc. Japan} {\bf 36} (1984),  75--89. 

\bibitem[H]{hir} Hirzebruch, F., {\em Topological   methods in algebraic geometry,} Springer-Verlag, Berlin,
1995.


\bibitem[HK]{hk} Hirzebruch, F., Kodaira, K., On the complex projective spaces, {\em J. Math. Pures Appl.} {\bf 36} (1957), 201--216.

\bibitem[KO]{ko}  Kobayashi, S., Ochiai, T., Characterizations of complex projective spaces and hyperquadrics,
{\em J. Math. Kyoto Univ.} {\bf  13} (1973), 31--47.

\bibitem[IP]{ip}  	Iskovskikh,  V.A., Prokhorov, Yu.,  Fano Varieties,   {\it Algebraic Geometry, V}, 
    Encyclopaedia Math. Sci. {\bf47}, Springer-Verlag, Berlin, 1999.
    
\bibitem[L]{lam}  Lamari, A., Courants k\"ahl\'eriens et surfaces compactes, {\em Ann. Inst. Fourier  (Grenoble)} {\bf49} (1999),  263--285.

\bibitem[LW]{lw} Libgober, A.S., Wood, J.W., Uniqueness of the complex structure on K\"ahler manifolds
of certain homotopy types, {\em  J. Differential Geom.} {\bf  32} (1990),   139--154.

\bibitem[M]{mor} Morrow, J., A survey of some results on complex K\"ahler manifolds, {\em Global Analysis (Papers in Honor of K. Kodaira),} 315--324, Univ. Tokyo Press, Tokyo, 1969.

\bibitem[N]{nov} Novikov, S.P., On manifolds with free abelian fundamental group and their application (in Russian), {\em Izv. Akad. Nauk SSSR Ser. Mat.} {\bf 30} (1966), 207--246.  English transl.: {\it Amer. Math. Soc. Transl.} {\bf71} (1968), 1--42.

\bibitem[PY]{py} Prasad, G., Yeung, S.-K.,
Fake projective planes, {\em
Invent. Math.} {\bf 168} (2007),  321--370. 

\bibitem[PY2]{py2} Prasad, G., Yeung, S.-K., Arithmetic fake projective spaces and arithmetic fake Grassmannians, {\em  
Amer. J. Math.} {\bf 131} (2009),  379--407. 

\bibitem[S]{sage}
\emph{{S}age {M}athematics {S}oftware ({V}ersion
  6.6)}, The Sage Developers, 2015, {\tt http://www.sagemath.org}.

\bibitem[Y]{yau} Yau, S.-T., Calabi's conjecture and some new results in algebraic geometry, {\em  Proc. Natl. Acad. Sci. USA} {\bf 74} (1977), 1798--1799.

\end{thebibliography}
 \end{document}